\documentclass[10pt,reqno,final]{amsart}
\usepackage{epsfig,amssymb,amsmath,mathtools,version}
\usepackage{amssymb,version,graphicx,fancybox,mathrsfs}
\usepackage{hyperref}
\hypersetup{colorlinks=true,citecolor=blue,linkcolor=red,anchorcolor=blue}
\usepackage[notcite,notref]{showkeys}
\usepackage{subfigure}
\usepackage{color}
\usepackage{stmaryrd}
\usepackage{multirow}
\usepackage{booktabs,siunitx}
\usepackage{multicol}
\usepackage{enumitem}
\usepackage{algorithm,setspace}
\usepackage{algpseudocode}
\algrenewcommand\algorithmicrequire{\textbf{Inputs:}}
\algrenewcommand\algorithmicensure{\textbf{Outputs:}}
\usepackage{epstopdf}
 \usepackage[foot]{amsaddr}
 
\usepackage[marginparwidth=3cm, marginparsep=3mm]{geometry}

\usepackage{textgreek}
\usepackage{upgreek}

\usepackage{relsize}

\setlength{\columnsep}{0.1cm}
\DeclareTextAccent{\myacc}{T1}{4}


\textwidth=14.2cm
\textheight=21.5cm
\setlength{\oddsidemargin}{1.1cm}
\setlength{\evensidemargin}{1.1cm}
\usepackage{marginnote}

\catcode`\@=11 \theoremstyle{plain}
\@addtoreset{equation}{section}   

\@addtoreset{figure}{section}
\renewcommand\thefigure{\thesection.\@arabic\c@figure}
\newtheorem{thm}{\bf Theorem}[section]

\newenvironment{theorem}{\begin{thm}} {\end{thm}}
\newtheorem{cor}{\bf Corollary}

\newenvironment{corollary}{\begin{cor}} {\end{cor}}
\newtheorem{lmm}{\bf Lemma}[section]

\newenvironment{lemma}{\begin{lmm}}{\end{lmm}}
\theoremstyle{remark}
\newtheorem{rem}{\bf Remark}[section]
\theoremstyle{definition}
\newtheorem{definition}{\bf Definition}[section]



\definecolor{ligreen}{rgb}{0.0, 0.3, 0.0}

\definecolor{darkblue}{rgb}{0.0, 0.0, 0.55}

\definecolor{anti-flashwhite}{rgb}{0.55, 0.57, 0.68}

\definecolor{teal}{RGB}{0,128,128}



\newcommand{\bs}[1]{\boldsymbol{#1}}
\allowdisplaybreaks


\definecolor{darkblue}{rgb}{0,0,0.75}
\newcommand{\wll}[1]{#1}



\begin{document}
\bibliographystyle{plain}
\baselineskip 13pt

\title[Approximate Kolmogorov Superpositions] {Explicit  Construction of Approximate Kolmogorov Superpositions with $C^2$-Smoothness}

\author[Approximate Kolmogorov  Superpositions]{Lunji Song$^\dagger$}
\address{${}^\dagger$\rm School of Mathematics and Statistics, Lanzhou University, Lanzhou 730000, China. The research of the first author is partially supported by the National Natural Science Foundation of China (Grant No. 12171216). Email: song@lzu.edu.cn.}

\author{Zilan Cheng$^\ddagger$}
\address{$^\ddagger$\rm Division of Mathematical Sciences, School of Physical and Mathematical Sciences, Nanyang Technological University, 637371, Singapore. Email: zilan001@e.ntu.edu.sg.}
\author{Juan Diego Toscano$^\S$}
\address{${}^\S$\rm Division of Applied Mathematics, Brown University, Providence, 02912, RI, USA.}

\author{Li-Lian Wang$^\ddagger$}
\address{$^\ddagger$\rm Division of Mathematical Sciences, School of Physical and Mathematical Sciences, Nanyang Technological University, 637371, Singapore. The research of the second and the fourth authors is partially supported by  Singapore MOE AcRF Tier 2 Grant: MOE-T2EP20224-0012. Email: lilian@ntu.edu.sg.}




\keywords{Kolmogorov superpositions, Non-differentiability, Approximate version,  $C^2$-smooth inner and outer functions, Convergence,  Neural networks} \subjclass[2020]{41A46, 65Y10,	68W25, 68Q17}

\begin{abstract} We explicitly construct an approximate version of the \wll{Kolmogorov superpositions}, which is composed of $C^2$-inner and outer functions, and  can approximate an  arbitrary $\alpha$-H\"older continuous function  $f\in {\mathcal H}^{\alpha}([0,1]^d),\, \alpha\in (0,1]$ with accuracy \wll{$O(N^{-\alpha})$,}  where 
$N$ denotes the number of outer summations.   The inner functions are generated by applying suitable translations and dilations to a piecewise $C^2,$ strictly-increasing function, while the outer functions are constructed row-wise through piecewise $C^2$-interpolation using newly designed shape functions. This novel variant of Kolmogorov superpositions overcomes the ``wild'' and ``pathological'' behaviors of the inherent single variable functions, but retains the essence of Kolmogorov's strategy of exact representation---an objective that Sprecher (Neural Netw.\! 144 (2021)\! 438–442) has actively pursued.  We also discuss the implications of this new construction and demonstrate its applicability to related neural networks.
\end{abstract}

\maketitle

\vspace*{-12pt}

\section{Introduction}
\setcounter{equation}{0}
\setcounter{lmm}{0}
\setcounter{thm}{0}

 In a series of seminal papers~\cite{Kolmogorov1956,Arnold1957a,Arnold1957b,Kolmogorov1957}, Kolmogorov and Arnold studied how multivariate continuous functions defined on a bounded domain can be represented as superpositions of continuous functions of fewer variables.
 Their significant discovery, known as the \wll{Kolmogorov superposition theorem (KST)}, was stated in Kolmogorov~\cite{Kolmogorov1957} (1957):
 {\em For any dimension $d \geq 2,$ there are continuous real functions $\psi_{p, q}(x)$ on the closed unit interval $E^1=[0, 1]$ such that every continuous real function $f(x_1, \ldots, x_d)$ on the $d$-dimensional unit cube $E^d=[0,1]^d$ is representable as}
\begin{equation}\label{KART-0}
f(x_1, \ldots, x_d)=\sum_{q=0}^{2 d} g_q\circ \sum_{p=1}^d \psi_{p, q}(x_p),
\end{equation}
{\em where $g_q$ are continuous real functions on $\mathbb R=(-\infty,\infty)$.} The univariate functions  $\psi_{p,q}$ are called the {\em inner functions} (which are  independent of $f$), while $g_q$ are referred to as the {\em outer functions} (which depend on $f$).  This mathematically elegant representation can be regarded as a refutation of Hilbert's 13th problem: {\em ``There exist continuous functions of three variables that cannot be represented as superpositions of continuous functions of two variables''} (see~\cite{Girosi1989}). 

Although the proof in \cite{Kolmogorov1957} was brief and not highly constructive, the theory \wll{itself}  has nonetheless inspired extensive research in at least two interwoven directions:  (i) construction of refined representations/versions 
(see e.g.,  \cite{Lorentz1962metric,Sprecher1963dissertation,Sprecher1965structure,Lorentz1966approximation,Kahane1975,Lorentz1996,Sprecher1996numerical,Sprecher1997numerical,Koppen2002,braun2009application}, Sprecher's book~\cite{Sprecher2017algebra} and recent works 
\cite{schmidt2021kolmogorov,Laczkovich2021superposition,ismayilova2024kolmogorov,Ismailov2024kart_uar_nn}), 
and  (ii) applications in network architectures and computations (see e.g., \cite{HechtNielsen1987,Girosi1989,Kourkova1991,Kuurkova1992kolmogorov,Igelnik2003,Guliyev2018approximation,Montanelli2020error,Shen2021three,He2024,Lai2024optimal,LaiShen2025Optimal,Liu2024KAN,Shukla2024comprehensive,toscano2025aivt,toscano2026,Toscano2024kkans,Guilhoto2025deep,toscano2026g} and references therein).  Lorentz~\cite{Lorentz1962metric} was the first to observe that the outer functions \( g_q \) can be chosen identically. Subsequently, Sprecher~\cite{Sprecher1963dissertation} demonstrated that the inner functions \( \psi_{p,q}(x) \) may be taken as \( \lambda_p \psi_q(x) \) for suitable constants $\lambda_p.$ \wll{The version due to Lorentz \cite{Lorentz1966approximation} and Kahane
\cite{Kahane1975}, as stated in \cite[Chapter~17]{Lorentz1996}, takes
the univariate functions in \eqref{KART-0} to be \(g_q = g\) and
\(\psi_{p,q} = \lambda_p \psi_q\). Kahane's proof was based on the
Baire category theorem.}
Sprecher~\cite{Sprecher1963dissertation,Sprecher1996numerical,Sprecher1997numerical} provided a more  computer-friendly construction of a different version presented in~\cite{Sprecher1997numerical}: {\em Let  $m\ge 2d, \gamma\ge m+2$ be given integers, and then any $f\in C([0,1]^d)$ can be represented by the superpositions:
\begin{equation}\label{Sprecher-0}
f(x_1, \ldots, x_d)=\displaystyle\sum_{q=0}^{m} {g_q}\circ\sum_{p=1}^d \lambda_p {\psi}(x_p+q a),
\end{equation}
where $\psi$ is  of 
$\frac{\ln 2} {\ln \gamma}$-Lipschitz class and  increasing, $g_q$ are continuous, and the parameters}  
\begin{equation}\label{alambda} 
a=\frac{1}{\gamma(\gamma-1)};\quad   \lambda_1=1, \quad  \lambda_p=\sum_{r=1}^{\infty} \gamma^{-(p-1)(d^r-1) /(d-1)},\;\;\; 2\le p\le d.
\end{equation} 
It is noteworthy that the function $\psi$ in \eqref{Sprecher-0} is defined via a limiting process:
\(\psi(x) = \lim\limits_{k \to \infty} \psi_k(d_k),\)
where \( d_k\in [0,1] \) are pre-assigned terminating rational numbers expressed in terms of \(\gamma\)-based decimals, and the values \(\psi_k(d_k)\) are computed using Köppen's recursive formula~\cite{Koppen2002} (modified from the formula in \cite{Sprecher1996numerical} so that the resulting function 
\(\psi\) is increasing, as rigorously shown in~\cite{Braun2009constructive}).

It is common that in most, if not all, existing constructions, the inner functions consist of staircase-like structures with derivatives vanishing almost everywhere, exhibiting “wild” and “pathological” behaviors (see  \cite{Girosi1989,Sprecher2017algebra,Demb2021note}). 
Indeed, the  exact superposition  fails to be true  if the inner functions are required to be smooth, owing to  the assertion by 
Vitushkin \cite{Vitushkin1954,Vitushkin2004} (on Hilbert's 13th problem): {\em there are $r\ge 1$ times continuously differentiable functions \wll{of $d$ variables}  not representable by $r$ times continuously differentiable functions of less than $d$ variables.}  Accordingly, any attempt to construct smooth variants of exact superpositions is futile, which inherently obstructs the direct application of the theory to neural networks \cite{HechtNielsen1987,Girosi1989,Demb2021note}.

K{$\dot {\rm u}$}rkov{\'a}~\cite{Kourkova1991,Kuurkova1992kolmogorov} suggested that one should sacrifice the exactness of representation by adopting an approximate version instead.   
In a recent note, Demb and Sprecher \cite{Demb2021note} raised the   question \wll{on constructing approximate Kolmogorov's superpositions}: 
 {\em Are there means of smoothing these mappings {\rm(}i.e., $\Psi_q=\sum_{p=1}^d \lambda_p \psi_q(x_p): [0,1]^d\mapsto [0,1]$  in any approximate version{\rm)} into more computer-friendly forms while retaining the essence of Kolmogorov's strategy of exact representation?} Although this open question was not answered in \cite{Demb2021note}, they managed to construct the superpositions more suitable for parallel computations, which could approximate any continuous function $f$ on 
$[0,1]^d$ to an error $\varepsilon\|f\|_\infty$ for any preassigned $\varepsilon>0:$
\begin{equation}\label{Demb-Sprecher21}
 \Big\|f(x_1,\ldots, x_d)-
 \sum_{q=0}^m g_q\circ \sum_{p=1}^d \lambda_p \psi(x_p+q a)\Big\|_\infty < \varepsilon \|f\|_\infty,   
\end{equation}
where $m$ is an integer depending on $\varepsilon,$ and $\lambda_p$ and $a$ are constants. Nevertheless,  the inner functions and Kolmogorov maps $\Psi_q$ therein still suffer from ``wildness'' (see \cite[Figure~2]{Demb2021note}). 
Igelnik and Parikh~\cite{Igelnik2003} attempted much earlier for approximate superpositions involving \( C^4 \)-smooth inner and outer functions, and claimed that the resulting version   could approximate any continuously differentiable function \( f \) with a bounded gradient on \( [0,1]^d \), with convergence guarantees:
\begin{equation}\label{IgeP03}
 \Big\|f(x_1,\ldots, x_d)-
 \sum_{q=1}^N g_q\circ \sum_{p=1}^d \lambda_p \psi_q(x_p)\Big\|_\infty =O\Big(\frac 1 N\Big),   
\end{equation}
 where $\sum_{p=1}^d \lambda_p \leq 1, \lambda_p>0$ are given rationally independent numbers (that is, the equation \( \sum_{p=1}^d r_p \lambda_p = 0 \), with rational coefficients \( r_p \), implies that all \( r_p = 0 \)). 
 Their construction essentially follows the constructive proof of Lorentz's version in Kahane \cite{Kahane1975} (also see \cite{Morris2021Hilbert13}).
The inner functions in \cite{Igelnik2003} are composed of constant segments over closed sub-intervals of length $O(1/N)$, interspersed with cubic spline transitions across gaps of length  $O(1/N^2)$ (which turned out to be the \wll{main} difference from Kahane \cite{Kahane1975},  where linear pieces were used to fill in the gaps), while their outer functions are constructed via piecewise linear and cubic spline interpolation of samples of \( f \) on hypercubes formed by disjoint sub-intervals. Igelnik and Parikh~\cite{Igelnik2003} further asserted that there exists a positive constant $C$ independent of $N$ and gap size $\delta$ such that  
\begin{equation}\label{c4bound}
    \|\psi_q^{(k)}\|_\infty,\; \|g_q^{(k)}\|_\infty \le C,\quad k=1,2,3,4, \;\; q=1,\ldots, N,
\end{equation}
which is essential for configuring the Kolmogorov spline network (KSN) with a parameter count of \( P = O(N^{3/2})\), \wll{independent of $d$.}   
However, the magnitudes of the derivatives grow rapidly—often explosively—with powers of \( N \) in the small gaps, so \eqref{c4bound} cannot hold. Moreover, the derivatives of inner functions vanish on the disjoint closed sub-intervals, which are undesirable \cite{Demb2021note}.  

The aim of this paper is to provide a \wll{positive} answer to the aforementioned open question posed in \cite{Demb2021note} by
 explicitly constructing  novel  $C^2$-smooth approximate  Kolmogorov superpositions: 
\begin{equation}\label{fx-KA-form00}
    f(\bs x)\approx  f_N(\bs x)=\sum_{q=1}^{N} g_q\circ \sum_{p=1}^d \lambda_p \psi_q(x_p)= \sum_{q=1}^{N} g_q\circ \Psi_q(\bs x),
\end{equation}
\noindent with the following important features: 
\begin{itemize}
\item[(i)] The inner functions $\psi_q\in C^2([0,1])$ are strictly increasing and can be generated from a single function $\phi(x)$ through suitable translations and dilations 
(see \eqref{PsiqxAS}). \wll{This piecewise $C^2$ function $\phi(x)$ is constructed by gluing new shape functions, consisting of different pieces on gaps of size $\delta=N^{-2}$ and on subintervals of size $h-\delta$ with $h=N^{-1}$ (see \eqref{shapfunc0}). 
 They have controllable derivative values in the small gaps and do not vanish in the disjoint closed sub-intervals (see Lemma \ref{Shape-Lemma0} and \eqref{innerA00}).}
\smallskip
\item[(ii)] Under Kahane’s configuration \cite{Kahane1975} of multilevel partitions of $[0,1]^d$ into hypercubes separated by gaps,
we determine a set of $\bs \lambda^*=(\lambda_1^*,\ldots, \lambda_d^*)$ with 
$|\bs \lambda^*|_1=\sum_{p=1}^d \lambda_p^*=1$  and show that the Kolmogorov maps  
\begin{equation}\label{Psiqq}
\Psi_q(\bs x)=\lambda_1^*\,\psi_q(x_1)+\cdots+ \lambda_d^*\,\psi_q(x_d):\, [0,1]^d\mapsto [0,1],\;\; 1\le q\le N,  
\end{equation}
can well separate the hypercubes and allow for a global interpolation to construct the outer functions; see Theorem \ref{Hypercube-Sep} and  
Definition \ref{Defn:Outer01}.
Interestingly, the scaling $\lambda_p^*=O(N^{p-d})$ 
(see Corollary \ref{lamda-small})  is reminiscent of the parameter choices in Sprecher’s construction \cite{Demb2021note}, which are derived from K\"oppen’s recursive relations \cite{Koppen2002}.  

\smallskip
\item[(iii)]  We rigorously show that the resulting approximate superpositions in \eqref{fx-KA-form00}
with $\lambda_p=\lambda_p^*$ 
can approximate any $\alpha$-H\"older continuous $f$ with $\alpha\in (0,1],$ within an accuracy $O(N^{-\alpha}).$ See the main result stated in Theorem 
\ref{MainResult}. \wll{Moreover, our explicit construction allows us to verify the convergence rate directly, which appears to be difficult, or has not been carried out, for most existing constructions.}
\smallskip 
\item[(iv)] \wll{Although the geometric setup follows \cite{Kahane1975,Igelnik2003}, our explicit construction of the inner functions enables a delicate analysis of the separation and ordering of the hypercubes, leading to the choice of parameters that can mitigate dislocations and nonlocal memory references of the singular Kolmogorov maps  \cite{Sprecher2017algebra,Demb2021note}. Such separations have not been explored in Igelnik and Parikh~\cite{Igelnik2003}, where the parameters $\lambda_p$ are assumed to be rationally independent as in \cite{Kahane1975}, and appear to be subtle in the discrete setting considered in \cite{Sprecher2017algebra,Demb2021note}.}
\end{itemize}
\smallskip

This explicit construction carries significant implications. Firstly, it inherently suggests an algorithm whose high parallelism and intrinsic properties — such as the strict monotonicity and smoothness of the inner functions — can, and perhaps should, be preserved in any implementation of Kolmogorov-based neural networks and applications (see, e.g., \cite{Leni2014Progressive,Montanelli2020error,He2024}).   
Secondly, it can act as an intermediate framework in which the one-dimensional functions are further approximated or parameterized by various ansatz classes (e.g., splines, radial basis, spectral basis, and multilayer perceptrons), with smoothness playing a critical role in establishing convergence and providing estimates for the required network size. After all, the non-smooth but continuous exact representations can only guarantee the universal approximability of Kolmogorov networks using various ansatz expansions/interpolation/parameterisations (see e.g., \cite{Kourkova1991,Kuurkova1992kolmogorov,Toscano2024kkans,PetersenZech2024}).  It is still open to show its convergence rate.           

The rest of the paper is organized as follows. In Section \ref{Sect2}, we introduce the new shape functions and construct the inner functions.  In Section \ref{Sub:center}, we analyze the separation properties of the Kolmogorov maps and explicitly construct the outer functions and approximate  
Kolmogorov superpositions. We prove  
 convergence of the approximate version and provide some numerical verifications of the expected convergence rate in  Section   
\ref{Sect:MainProof-1}.  We then conclude the paper with some discussions and remarks.

\section{Shape functions and construction of inner functions}\label{Sect2}
\setcounter{equation}{0}
\setcounter{lmm}{0}
\setcounter{thm}{0}

In this section, we construct the inner functions. We begin by introducing the shape functions, which constitute the fundamental building blocks for the construction.  These functions are then assembled to form a strictly increasing, piecewise $C^2$ function $\phi$, which generates the inner functions $\psi_q$ through appropriate translations and dilations. 
 With this in place, we define the Kolmogorov maps \(\Psi_q(\bs{x}; \bs{\lambda})\)
 with respect to  \(\bs{\lambda}\) and the parameters $\nu, h.$
 

\smallskip

We first introduce two types of shape functions.
Consider the smeared-out $C^2$-Heaviside function (see e.g., \cite[(1.22)]{Osher2003}):  \begin{equation*}\label{HeavideA}
    H_\epsilon(z)=
\begin{dcases}0, & z< -\epsilon \\
\frac{1}{2}+\frac{z}{2\epsilon}+\frac{1}{2\pi}\sin\frac{\pi z}{\epsilon},\quad  &-\epsilon \le z \le \epsilon,\\
1, & z> \epsilon.
\end{dcases}
\end{equation*}
We  extract  the smeared-out portion of $H_\epsilon(z), z\in [-\epsilon,\epsilon],$ and transform it to the reference interval $[0,1]:$
\begin{equation}\label{St}
    S(z):=z-\frac{1}{2\pi} \sin{2\pi z},\quad z\in [0,1].
\end{equation}
One can readily verify that
\begin{equation}\label{sproperty}
     S(1)=1,\;\; S(0)=S'(0)=S''(0)=S'(1)=S''(1)=0,\;\; S(1/2)=1/2,
\end{equation}
and $S(z)$ is strictly  increasing in $(0,1)$. 

Another piece of the puzzle is the \wll{quintic} polynomial:  
\begin{equation}\label{P-fun} 
P(z):=\int_0^{z}\tau^2(1-\tau)^2 d\tau\Big/\int_0^{1}\tau^2(1-\tau)^2 d\tau= z^3(6z^2-15z+10), 
\end{equation}
which also satisfies 
\begin{equation}\label{Pproperty}
     P(1)=1,\;\; P(0)=P'(0)=P''(0)=P'(1)=P''(1)=0, \;\; P(1/2)=1/2,
\end{equation}
 and is strictly  increasing as $P'(z)=30z^2(1-z)^2$ in $(0,1).$

We next glue $S$ and $P$ piecewise to construct a one-dimensional function generating the inner functions and the associated Kolmogorov maps.
  To do this, we first lay out the partition and grids as in \wll{Kahane's constructive proof 
  \cite{Kahane1975} (also see \cite{Lorentz1996,Igelnik2003}).} 
  \wll{Let $\mathbb{N}$ and $\mathbb{R}$ denote the sets of natural numbers and real numbers, respectively.
For  $N\in \mathbb N,$ set 
$\delta=\tfrac 1 {N^2}$ and 
$h=N\delta=\tfrac 1 N.$ Further, let $\nu\in (0,h)$ be 
a tuning parameter to be specified later.} 
We generate two sets of interlacing grids on the real line:
\begin{equation}\label{tjhd}
t_j=jh, \quad t_j^\delta=t_j+\delta=jh+\delta,\;\;\; j=0,\pm 1,\ldots,
\end{equation}
which naturally induce  three types of closed sub-intervals:
\begin{equation}\label{1D-P0A} 
  B^j:=[t_j,t_{j+1}], \quad  G^j:=[t_j, t_j^\delta],\quad   I^j:=[t_j^\delta, t_{j+1}],
\end{equation}
referred to as the $j$-th {\em block}, {\em gap}, and {\em interval}, respectively. Clearly, $B^j=G^j\cup I^j,$ with the intervals $I^j$ of length $(N-1)\delta$  separated by small gaps $G^j$  of length $\delta$ (see Figure \ref{figsubLq2D} (a)). 
We prescribe the ``interpolating'' data  $\{(t_j, y_j),\; (t_j^\delta, y_j^\nu)\}$:  
\begin{equation}\label{yjdata}
    y_j=t_j=jh,\quad  y_j^\nu=t_j+\nu=jh+\nu,
\end{equation}
and on each block, we define the following piecewise function:~for  $j=0,\pm 1,\ldots,$ 
\begin{equation}\label{shapfunc0}
\begin{split}
\phi(t)\big|_{t\in B^j}&=
\begin{dcases}
y_j+(y_j^\nu-y_j)\, S\big(\tfrac{t-t_j}{t_j^\delta-t_j}\big), &t\in G^j,\\
y_j^\nu+(y_{j+1}-y_j^\nu)\,P\big(\tfrac{t-t_j^\delta}{t_{j+1}-t_j^\delta}\big), &t\in I^j,
\end{dcases}\\[4pt]
&=
\begin{dcases}
jh+\nu\, S\big(\tfrac{t-jh}\delta\big), &t\in [jh, jh+\delta],\\
jh+\nu+(h-\nu)\,P\big(\tfrac{t-jh-\delta}{h-\delta}\big), &t\in [jh+\delta, (j+1)h].
\end{dcases}
\end{split}
\end{equation}

\wll{We observe that, on each block $B^j$, \(\phi\) consists of a \(P\)-piece on the
subinterval of length \(h-\delta\) and an \(S\)-piece on the gap of length
\(\delta\). Moreover, \(\phi\) is strictly increasing and maps
\(B^j=[jh,(j+1)h]\) onto itself. We highlight that the parameter $\nu$ controls the jumps at the junctions between the gaps and the hypercubes, and tunes the deviations from the constant values on the subintervals $I^j$. Indeed, we find this newly introduced parameter plays an important role in our construction, 
whereas in, for example, \cite{Kahane1975,Lorentz1996,Igelnik2003}, constant pieces were used in the subintervals.
We summarize below the properties of the generating function $\phi$.} 
\begin{lemma}\label{Shape-Lemma0}  
Let $\phi(t)$ be the piecewise function defined in \eqref{shapfunc0}. Then we have the following properties.
\begin{itemize}
    \item[{\rm (i)}] $\phi(t)\in C^2$ is strictly  increasing,      and satisfies
\begin{equation}\label{intercond}
\begin{split}
\phi(t_j)=t_j,\quad \phi(t^{\delta}_j)=t_j+\nu,\quad  \phi^{\prime}(t_j)=\phi^{\prime\prime}(t_j)=\phi^{\prime}(t^{\delta}_j)=\phi^{\prime\prime}(t^{\delta}_j)=0.
\end{split}
\end{equation}

\item[{\rm (ii)}] 
For the pieces on  $ G^j=[jh, jh+\delta],$
\begin{equation}\label{bounds}
   \max_{t\in G^j}\phi(t)\le jh+\nu,\quad    
   \max_{t\in G^j} |\phi'(t)|\le 2\,\frac{\nu}{\delta},\quad   \max_{t\in G^j}|\phi^{\prime\prime}(t)|\leq 2\pi\,\frac {\nu}{\delta^{2}},
\end{equation}
and on $ I^j=[jh+\delta,(j+1)h],$
\begin{equation}\label{bounds2}
   \max_{t\in I^j} \phi(t)\le (j+1)h,\quad 
   \max_{t\in I^j}|\phi'(t)|\le \frac{15}{8}\,\frac{
   h-\nu}{h-\delta},\quad   \max_{t\in I^j}|\phi^{\prime\prime}(t)|\leq  \frac{10}{\sqrt{3}}\,\frac{h-\nu}{(h-\delta)^2}.
\end{equation}
\end{itemize}
\end{lemma}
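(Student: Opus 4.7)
The plan is to prove part (i) by reducing everything to the endpoint properties \eqref{sproperty}--\eqref{Pproperty} of the shape functions $S$ and $P$, and to prove part (ii) by a direct chain-rule calculation combined with the maxima of $|S'|,|S''|,|P'|,|P''|$ on $[0,1]$.

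For part (i), I would first check the interpolation conditions \eqref{intercond} piece-by-piece from the definition \eqref{shapfunc0}. At $t=t_j$ the gap formula gives $\phi(t_j)=y_j+(y_j^\delta-y_j)S(0)=y_j=t_j$ by \eqref{sproperty}; at $t=t_j^\delta$ the gap and the interval formulas both give $\phi(t_j^\delta)=y_j^\delta=t_j+\delta^\mu$ using $S(1)=1$ and $P(0)=0$. The derivative conditions $\phi'(t_j)=\phi''(t_j)=\phi'(t_j^\delta)=\phi''(t_j^\delta)=0$ follow because every one-sided derivative carries a factor of $S'(0),S''(0),S'(1),S''(1),P'(0),P''(0),P'(1)$, or $P''(1)$, all of which vanish by \eqref{sproperty}--\eqref{Pproperty}. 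This simultaneously gives the matching of one-sided derivatives at each junction, so $\phi\in C^2(\Lambda)$. For strict monotonicity, I would argue piecewise: on $G^j$ the composition $\delta^\mu S((t-jh)/\delta)$ is strictly increasing because $S$ is (since $S'(z)=1-\cos 2\pi z>0$ for $z\in(0,1)$) and $\delta^\mu>0$; on $I^j$ the scaling constant $(h-\delta^\mu)/(h-\delta)>0$ (assuming the tacit parameter regime $h>\delta^\mu$, which the forthcoming choices ensure) and $P'(z)=30z^2(1-z)^2>0$ on $(0,1)$. Since the interpolated values $\phi(t_j)=t_j$ and $\phi(t_j^\delta)=t_j+\delta^\mu$ are themselves strictly increasing in $j$ and continuity at the nodes has just been verified, the global strict monotonicity on $\Lambda$ follows.

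For part (ii), the value bounds are immediate: on $G^j$, since $0\le S\le 1$, $\phi(t)\le jh+\delta^\mu$; on $I^j$, since $0\le P\le 1$, $\phi(t)\le jh+\delta^\mu+(h-\delta^\mu)=(j+1)h$. For the derivative bounds I would apply the chain rule to \eqref{shapfunc0}. On $G^j$, $\phi'(t)=\delta^{\mu-1}S'(z)$ and $\phi''(t)=\delta^{\mu-2}S''(z)$ with $z=(t-jh)/\delta\in[0,1]$; then $|S'(z)|=|1-\cos 2\pi z|\le 2$ and $|S''(z)|=|2\pi\sin 2\pi z|\le 2\pi$ give the two stated bounds. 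On $I^j$, $\phi'(t)=\frac{h-\delta^\mu}{h-\delta}P'(z)$ and $\phi''(t)=\frac{h-\delta^\mu}{(h-\delta)^2}P''(z)$ with $z=(t-jh-\delta)/(h-\delta)\in[0,1]$.

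The only mildly non-routine step is extracting the sharp bounds on $|P'|$ and $|P''|$. For $P'(z)=30z^2(1-z)^2$ the maximum on $[0,1]$ sits at $z=\tfrac12$ and equals $30\cdot\tfrac1{16}=\tfrac{15}{8}$. For $P''(z)=60\,z(1-z)(1-2z)$, I would locate the extrema by solving $(1-2z)^2=2z(1-z)$, which reduces to $6z^2-6z+1=0$ with roots $z=\tfrac12\pm\tfrac{\sqrt 3}{6}$; substituting back gives $|P''(z)|\le 60\cdot\tfrac{1}{6\sqrt 3}=\tfrac{10}{\sqrt 3}$, the constant appearing in \eqref{bounds2}. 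Combining these with the chain-rule factors yields \eqref{bounds2}. I do not anticipate a conceptual obstacle; the main care is to keep track of the scaling factors $\delta^{-1},\delta^{-2}$ in the gap versus $(h-\delta)^{-1},(h-\delta)^{-2}$ in the interval, and to cite \eqref{sproperty}--\eqref{Pproperty} at every junction so that $C^2$-gluing is transparent.
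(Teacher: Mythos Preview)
Your proposal is correct and follows essentially the same approach as the paper, which handles (i) by citing the endpoint properties of $S$ and $P$ and handles (ii) by the chain rule together with the ranges of $S,S',S''$ on $[0,1]$ (and ``similarly'' for $P$). You simply fill in the details the paper omits, in particular the explicit computation of $\max_{[0,1]}P'=15/8$ and $\max_{[0,1]}|P''|=10/\sqrt{3}$, and you correctly flag the tacit assumption $h>\delta^\mu$ needed for strict monotonicity on $I^j$.
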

\begin{proof} The property  (i) follows from \eqref{St}-\eqref{P-fun} and  \eqref{shapfunc0} straightforwardly.   As  $S'(z), P'(z)>0$ on $(0,1),$ the strict monotonicity of $\phi(t)$ is obvious. 
(ii) Note that   
$$ 
S(z)\in[0,1],\quad S'(z)\in [0, 2], \quad  S''(z)\in [-2\pi, 2\pi],  
$$
so we can derive the bounds in \eqref{bounds}  from \eqref{shapfunc0} and direct calculation. 
Similarly, we can obtain \eqref{bounds2}.
\end{proof}
\begin{rem}\label{Rmk:PS} \wll{\em  It is clear from \eqref{shapfunc0} that two different shape functions are used to construct $\phi(t)$. This is mainly because, on the subintervals, the polynomial $P$ maps rational numbers to rational numbers, which ensures the separation property for rationally independent $\lambda_p$, as in \cite{Kahane1975,Igelnik2003}. In contrast, the function $S$ is used in the gaps since its derivative has a relatively smaller magnitude. In principle, $S$ could be replaced by $P$ in the gaps, but \eqref{shapfunc0} on a block would still be defined in two pieces.   \qed    
}
\end{rem}

\wll{The construction will only involve $(N+1)$ blocks with $j\in {\mathcal J}:=\{-1,\ldots, N-1\},$ so we restrict $\phi(t)$ on $[-h,1].$}
To construct the inner functions,   we generate an $N$-level (or $N$-story)  covering of  $[0,1]$ by $\delta$-shifting  of the closed intervals and gaps defined in \eqref{tjhd}-\eqref{1D-P0A} (see  \cite{Kahane1975} or Figure \ref{figsubLq2D} (a) for an illustration of $N=5$ and $\delta=1/25$):
\begin{equation}\label{1D-P0qq}
 B_q^j:=B^j+(q-1)\delta= 
 [jh+(q-1)\delta,(j+1)h+(q-1)\delta],
\end{equation}
and likewise   
\begin{equation}\label{1D-G0qq} 
 G_q^j:=G^j+(q-1)\delta,\quad  I_q^j:=I^j+(q-1)\delta,
\end{equation}
for $q=1,\ldots, N.$
This leads to the multi-level  covering of $[0,1]$:
\begin{equation}\label{LqdefnL}
L_q:=\Big(\bigcup_{j=-1}^{N-1}  I_q^j\Big)\cap [0,1].
\end{equation} 

\wll{We construct  the inner functions through  linear transformations and suitable normalization of $\phi(t)$ defined in
\eqref{shapfunc0}.} 
\begin{definition}[{\bf Inner functions}]\label{Inner-fun}  \emph{Let $\phi(t)$ be the piecewise $C^2,$ strictly increasing function defined in
\eqref{shapfunc0}, and introduce the linear transformations: \begin{equation}\label{shift-coord}
x=t+(q-1)\delta\;\;\; {\rm or} \;\;\; t=x+(1-q)\delta, \quad  t\in B^j,\;\; x\in B_q^j.
\end{equation} 
We define the inner function  at the $q$-th level by
\begin{equation}\label{PsiqxAS}
\begin{split}
\psi_q(x)& =\frac{\phi(x+(1-q)\delta)- \phi((1-q)\delta)} {\phi(1+(1-q)\delta)-\phi((1-q)\delta)}\\[4pt]
&=\phi(x+(1-q)\delta)- \phi((1-q)\delta), \quad  x\in B_q^j\cap [0,1],
\end{split}
\end{equation}
 for $j\in \mathcal J$ and $q=1,\ldots, N.$} 
\end{definition}

\wll{Since 
$$(1-q)\delta\in I^{-1}=[\delta-h,0],\quad 1+(1-q)\delta\in I^{N-1}=[1-(N-1)\delta,1],$$  we find from \eqref{shapfunc0} with $j=-1,N$ that 
\begin{equation}\label{phi-delta}
\begin{split}
  \phi((1-q)\delta)=(h-\nu)\big(P(\tfrac{N-q}{N-1})-1\big),\quad  \phi(1+(1-q)\delta)=1+(h-\nu)\big(P(\tfrac{N-q}{N-1})-1\big).
  \end{split}
  \end{equation}
Thus, 
\begin{equation}\label{phi-delta-1}
\begin{split}
  \phi(1+(1-q)\delta)-\phi((1-q)\delta)=1,
  \end{split}
  \end{equation}
   leading to the second identity in \eqref{PsiqxAS}.} 
In view of $\psi_q(0)=0,\,\psi_q(1)=1,$ and   $\phi(t)$ \wll{being}  strictly increasing, $\psi_q(x)$ is also strictly increasing and 
its range is  $[0,1].$ 
Moreover,  we find from  \eqref{phi-delta} that for $1\le q\le N,$
  \begin{equation}\label{phiqibnd}
     \nu-h\le \phi((1-N)\delta) \le \phi((1-q)\delta)\le \phi(0)=0.
  \end{equation}

As a direct consequence of Lemma \ref{Shape-Lemma0}, Definition \ref{Inner-fun}   and the above,  the inner functions
$\psi_q$ enjoys the following properties. 
For each $1\le q\le N,$ the inner function $\psi_q(x)\in C^2([0,1])$ is strictly increasing 
and its range is $[0,1].$ Moreover, 
if $h> \nu>0,$ then 
\begin{equation}\label{innerA00}
\begin{split}
& 0\le \psi_q'(x)\le \max\Big\{ 
\frac {2\nu}{\delta}, \frac{15}{8}\,\frac{
   h-\nu}{h-\delta}\Big\},\quad  |\psi_q''(x)|\le \max\Big\{ 
\frac {2\pi\nu}{\delta^{2}}, \frac{10}{\sqrt{3}}\,\frac{h-\nu}{(h-\delta)^2} \Big\}.
\end{split}
\end{equation}

In Figure \ref{figsubLq2D}, we illustrate the partitions of $[0,1]$ at different levels by shifting, and plot the corresponding inner functions $\psi_q(x)$ for \wll{$N=5,\delta=1/25,$ and $\nu=\delta^2,$} which demonstrate their smoothness and strict monotonicity. 
\begin{figure}[!h]
\centering
 \subfigure[Intervals+Gaps at $L_q$]{
    \includegraphics[width=0.24\textwidth]{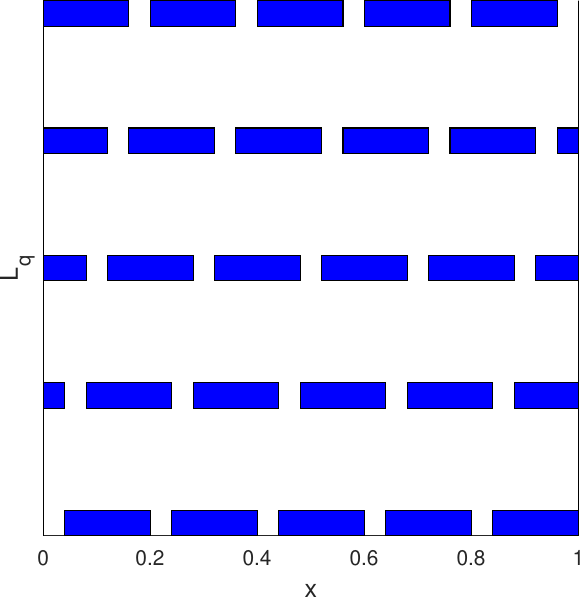}
     }\quad 
      \subfigure[$\psi_1(x)$ at $L_1$]{
    \includegraphics[width=0.25\textwidth]{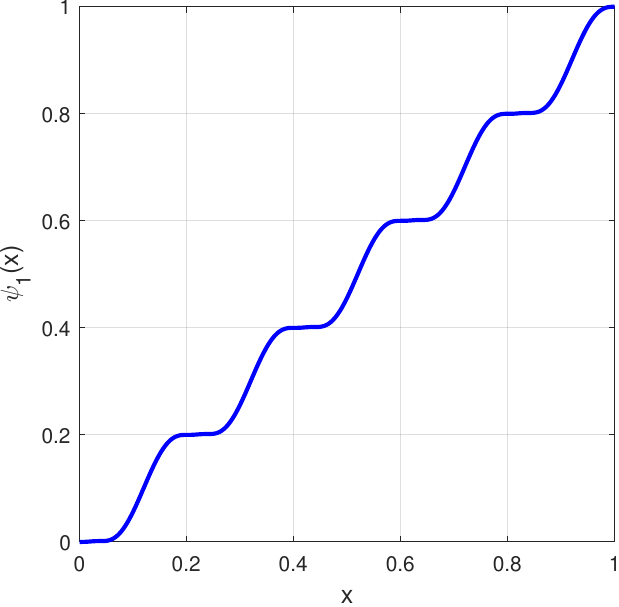}
     } \quad 
        \subfigure[$\psi_2(x)$ at $L_2$]{
    \includegraphics[width=0.25\textwidth]{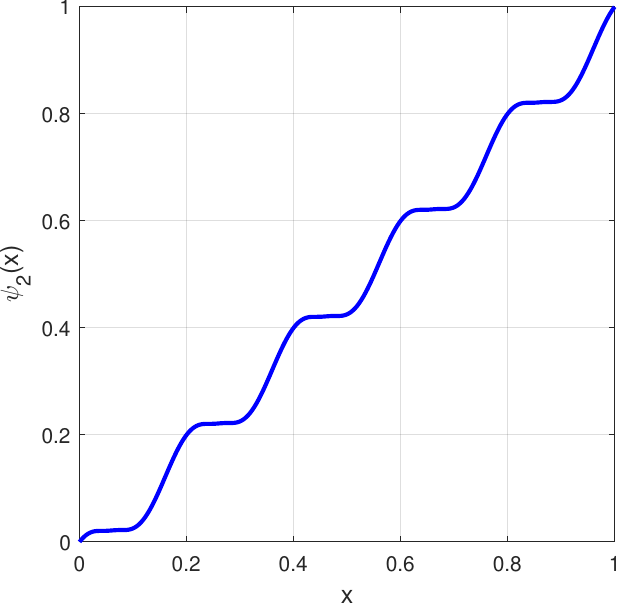}
     }
     
        \subfigure[$\psi_3(x)$ at $L_3$]{
    \includegraphics[width=0.25\textwidth]{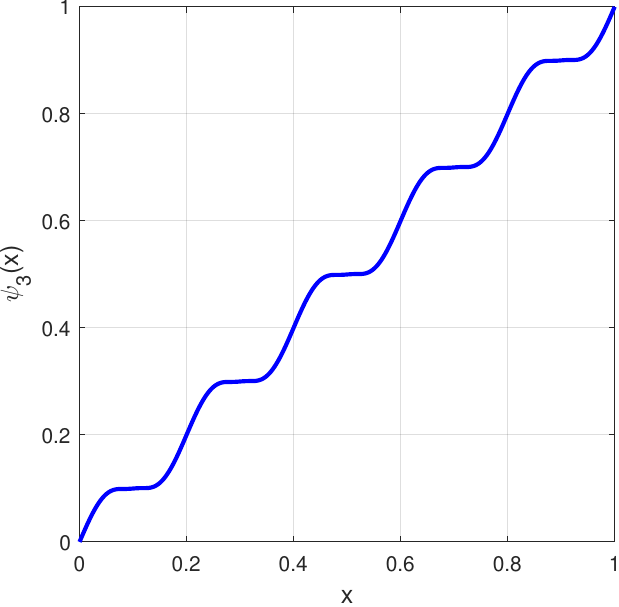}
     }
     \quad 
     \subfigure[$\psi_4(x)$ at $L_4$]{
    \includegraphics[width=0.25\textwidth]{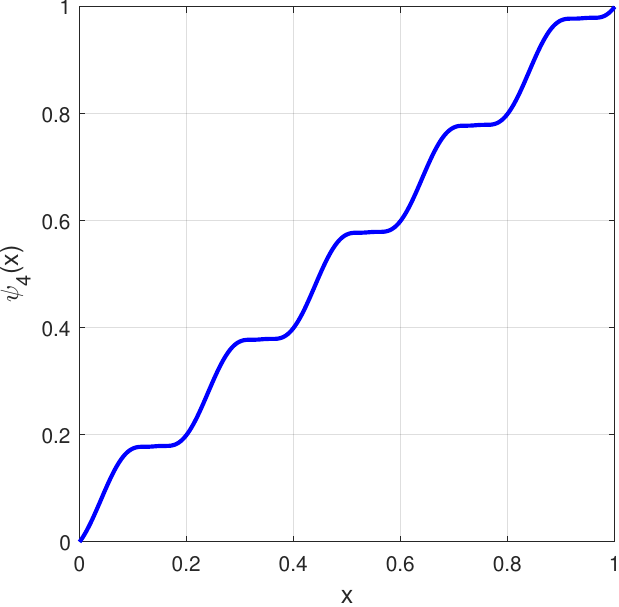}
     } \quad 
     \subfigure[$\psi_5(x)$ at $L_5$]{
    \includegraphics[width=0.25\textwidth]{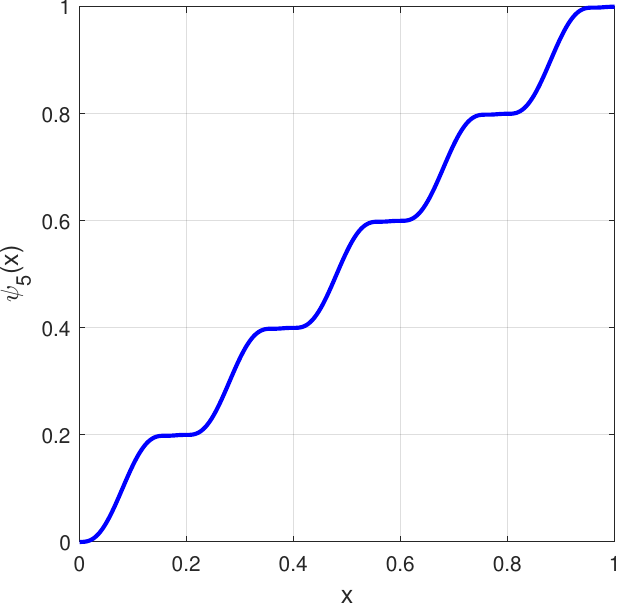}
     } 
             \caption{(a)\, Partition of $[0,1]$ by closed sub-intervals and gaps in \eqref{LqdefnL} with $N=5,\delta=1/25,$ and $\nu=\delta^2$. (b)-(f)\, Plots of $\psi_q(x)$ at $L_q$ with $q=1,\ldots,5.$
             }\label{figsubLq2D}
\end{figure}

\smallskip
Built upon these explicit inner functions, we define the corresponding Kolmogorov maps (see Brattka \cite{Brattka2007} for the definition of this notion).
\begin{definition}[{\bf Kolmogorov maps}]\label{DefnIK} 
\emph{Let $\bs \lambda = (\lambda_1, \ldots, \lambda_d)$ be a given sequence of  positive numbers, and let $\psi_q(x)$ be the  piecewise $C^2$-functions defined in \eqref{PsiqxAS}. Then we define the Kolmogorov maps $\Psi_q:\, [0,1]^d\mapsto  [0,|\bs \lambda|_1]$  as the superpositions:
\begin{equation}\label{d-PsiA0}
   \Psi_q(\bs x):=\Psi_q(\bs x;\bs \lambda):=\sum_{p=1}^d \lambda_p \psi_{q}(x_p), 
\end{equation}
for $q=1,\ldots,  N,$ where $|\bs \lambda|_1=\lambda_1+\cdots+\lambda_d.$}
\end{definition}

\begin{rem}\label{singlephi}
\emph{In keeping with the essence of the exact representations in \cite{Sprecher1965structure, Koppen2002, braun2009application},  the so-defined inner functions  can also be generated from one function {\rm(}i.e., $\phi(t)$ in \eqref{shapfunc0}{\rm)} through translations and vertical shifts:  
\begin{equation}\label{d-PsiAB}
    \Psi_{l+1}(x_1,\ldots,x_d)=\sum_{p=1}^d \lambda_p \big\{\phi(x_p+\alpha l)+\beta_l \big\},\quad l=0,1,\ldots, N-1,
\end{equation}
where $\alpha=-\delta$ and $\beta_l=-\phi(-\delta l).$}\qed 
\end{rem}

\section{Construction of outer functions and approximate version}\label{Sub:center} 

This section is devoted to the explicit construction of the outer functions, thereby leading to the approximate Kolmogorov superpositions.  This requires a careful analysis of the separation properties of the Kolmogorov maps, particularly regarding the selection of the parameters $\bs \lambda$ and $\nu$, to mitigate dislocations caused by these singular maps. 


\subsection{Geometric setup} 
We first introduce the geometric setup based on the partitions in \eqref{1D-P0qq}-\eqref{LqdefnL}. 
Using the sub-intervals $I_q^j$  in \eqref{1D-G0qq}, we form, for each $q$, a set of disjoint hypercubes  
\begin{equation}\label{Cqj}   
 \bs C_q^{\bs j}:= I_q^{j_1}\times \ldots \times   I_q^{j_d},\quad   \bs j=(j_1,\ldots, j_d).
\end{equation} 
Each hypercube has a volume $|\bs C_q^{\bs j}|=(h-\delta)^d,$ and the coordinate of its center is
\begin{equation}\label{cqjcenter}  \bs c_q^{\bs j}:=(c_q^{j_1},\ldots, c_q^{j_d})\;\;\; {\rm with} \;\;\; c_q^{j_p}=j_ph+(h+\delta)/2+(q-1)\delta.
    \end{equation} 
Let
$\bs v_q^{\bs j}, \tilde{\bs v}_q^{\bs j}$ be the coordinates of 
lower-corner (i.e., component-wise minimum vertex) and upper-corner  (i.e., component-wise maximum vertex) 
of  $\bs C_q^{\bs j},$ respectively.  Then we have 
\begin{equation}\label{lower-corner1}
   {\bs v}_{q}^{\bs j}=( v_q^{j_1},\ldots,  v_q^{j_d})=\bs j h+q\,\delta\,\bs 1,\quad \tilde{\bs v}_{q}^{\bs j}=(\tilde v_q^{j_1},\ldots, \tilde v_q^{j_d})=\bs v_q^{\bs j}+(h-\delta)\,\bs 1,
\end{equation}
where $\bs 1=(1,\ldots, 1)\in \mathbb R^d.$ We can also express the coordinate components in  
$t_j, t_j^\delta$  in \eqref{tjhd} as  
$v_q^j=t_j^\delta+(q-1)\delta$ and $\tilde v_q^j=t_{j+1}+(q-1)\delta.$ It is clear that 
\begin{equation}\label{cqjc-00}
\bs c_q^{\bs j}=\frac 1 2 (\bs v_q^{\bs j}+
\tilde {\bs v}_q^{\bs j}),\quad 1\le q\le N.
\end{equation}

We are  interested in the hypercubes  
$\bs C^{\bs j}_q\subseteq [0,1]^d$ and $\bs C_q^{\bs j}\cap [0,1]^d\not=\emptyset,$ which only involve 
$I_q^{j_p}$ with 
$-1\le j_p\le N-1$  in $L_q$ defined 
\eqref{LqdefnL}.  When all $j_p\in {\mathcal J}_{\rm in}:=\{0,1,\ldots, N-2\},$ the hypercube $\bs C_q^{\bs j}$ is  \emph{interior}. When there exists $j_p\in {\mathcal J}_{\rm b}:=\{-1, N-1\},$ the hypercubes $\bs C^{\bs j}_q$ may be intersected by the boundary hyperplanes $x_p=0$ or $x_p=1,$ thereby producing partial (incomplete) hypercubes, which we call \emph{boundary hypercubes}. 
\wll{In such cases, we relocate and redefine the corresponding coordinate components   $v_q^{j_p}=0$ (for all $j_p=-1$) and $\tilde v_q^{j_p}=1$ (for all $j_p=N-1$) so as to include the endpoints $x_p=0,1$.  
In particular,  for $q=1,N,$ the hypercubes $\bs C_q^{\bs j}$  can be treated as interior hypercubes with the corner coordinates given by  \eqref{lower-corner1} with all $j_p\in \{0,\ldots N-1\}$ for $q=1$   (i.e., $\tilde v_q^{-1}= v_q^{-1}=0$)
and  all $j_p\in \{-1,\ldots N-2\}$ for $q=N$ (i.e., $v_q^{N-1}=\tilde v_q^{N-1}=1$).  
With a slight abuse of notation, we still denote these relocated and modified ``corners'' by the same notation. Correspondingly, we redefine the centers of the boundary hypercubes by $c_q^{j_p}=v_q^{j_p}=0$ for $j_p=-1$ at $x_p=0$ and $c_q^{j_p}=\tilde v_q^{j_p}=1$ for $j_p=N-1$ at $x_p=1.$}   In Figure \ref{figsub2D-A0}, we illustrate the hypercubes, centers, and gaps for several levels and also the covering of $[0,1]^2$ by $\delta$-shifting, where $N=5$ and $\delta=1/25$ as in  Figure \ref{figsubLq2D} (a).

\begin{figure}[!h]
\centering
 \subfigure[$L_1$]{
\includegraphics[width=0.25\textwidth]
{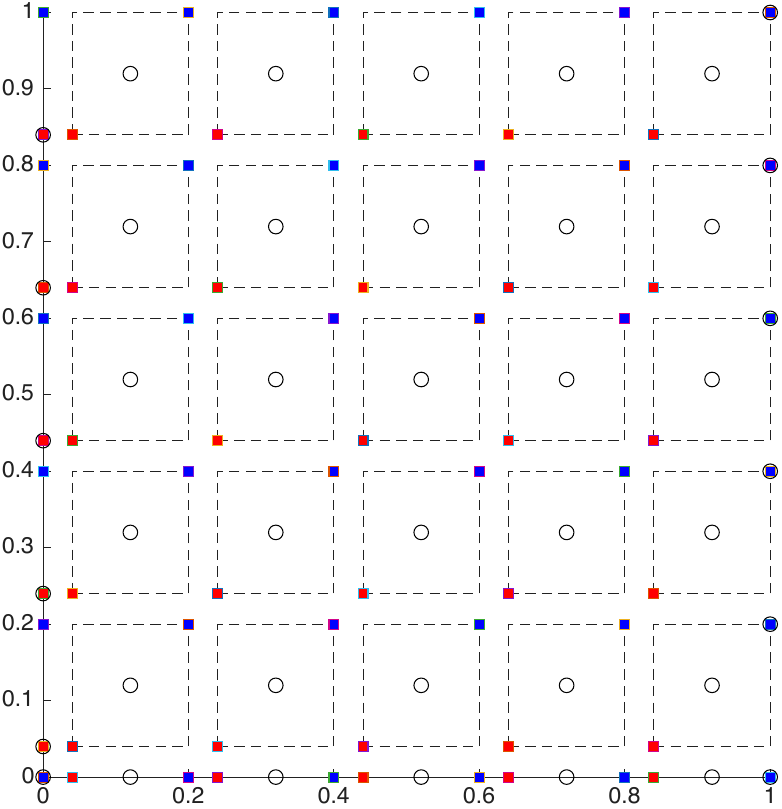}
     } \quad 
      \subfigure[$L_2$]{
\includegraphics[width=0.25\textwidth]
{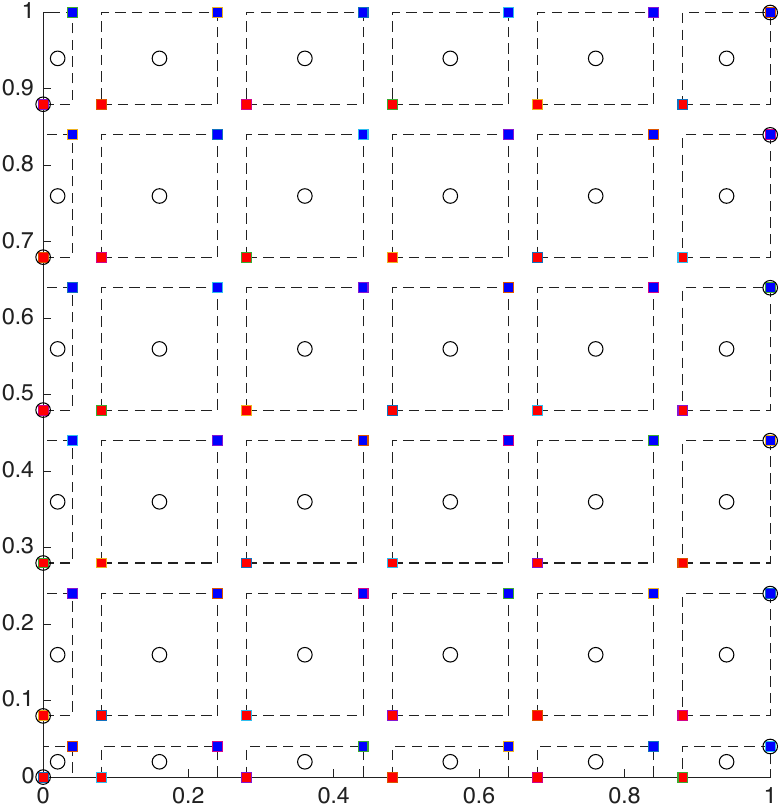}
     }\quad 
        \subfigure[$L_3$]{
\includegraphics[width=0.25\textwidth]
{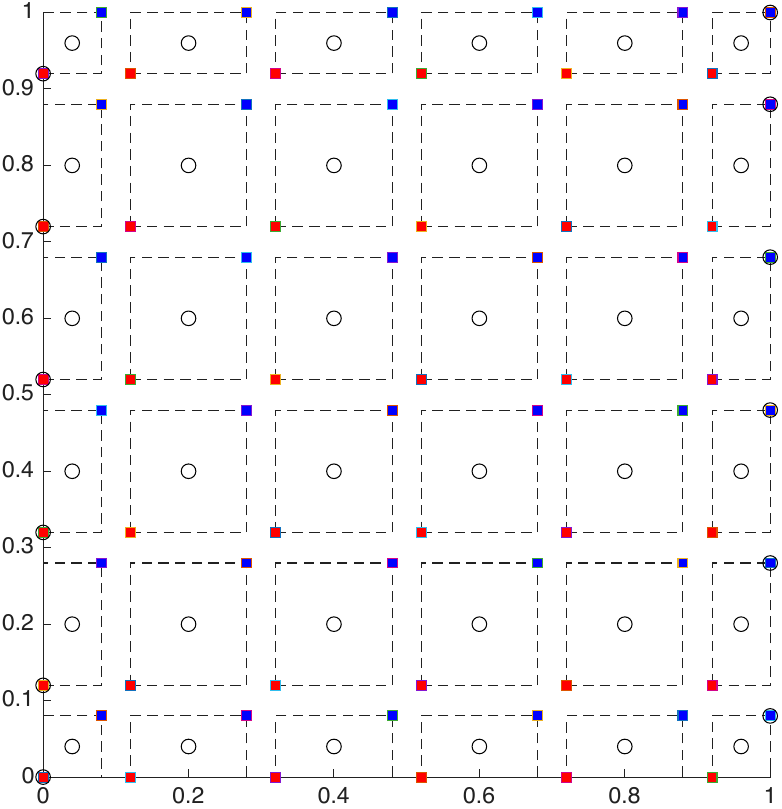}
     } 
    
      \hspace*{10pt}  \subfigure[$L_4 $]{
    \includegraphics[width=0.25\textwidth]
 {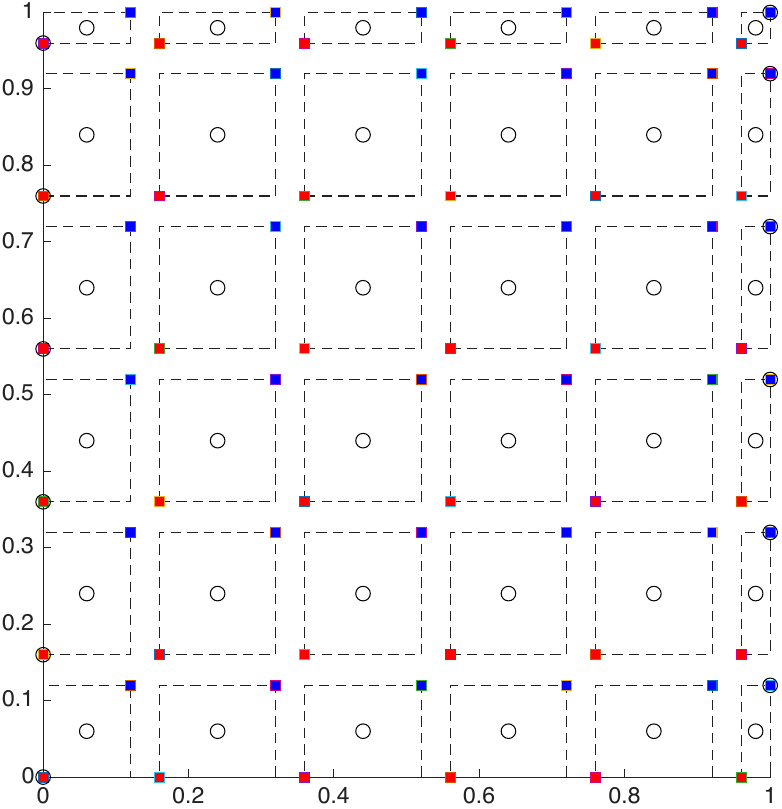}
     }\quad\; 
     \subfigure[$L_5 $]{
    \includegraphics[width=0.25\textwidth]
 {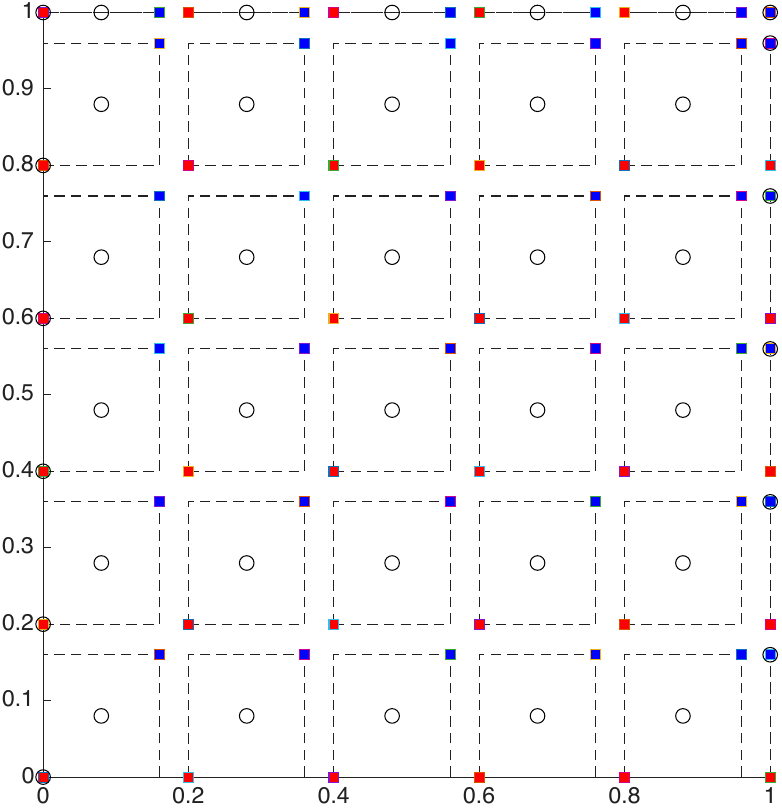}
     }\quad 
     \subfigure[All levels $L_q$]{
    \includegraphics[width=0.27\textwidth]{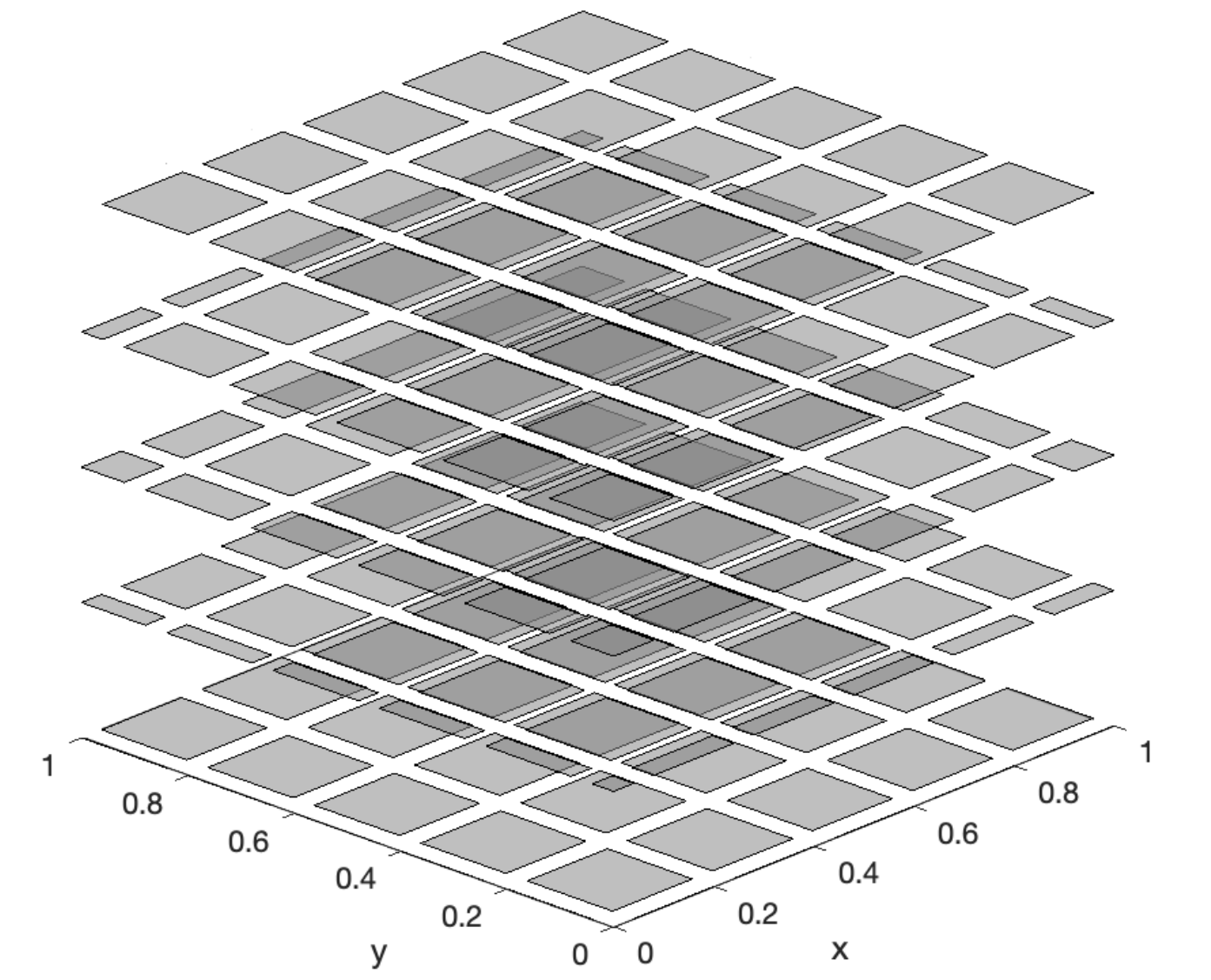}
     }
             \caption{(a)-(e)\, Illustration of  
             disjoint hypercubes,``corners'' (square) ``centers'' (circle) and gaps at levels
             ${L}_q$ with $q=1,\cdots,5$
             in 2D. (f) Covering of $[0,1]^2$ at different levels by $\delta$-shifting. Here, $N=5$ and $\delta=1/25.$}\label{figsub2D-A0}
\end{figure}


       

Correspondingly, we define the mapped hypercube centers, lower corners, and upper corners by
\begin{equation}\label{aqJaj}
z_q^{\bs j}:=\Psi_q(\bs Z_q^{\bs j})=\Psi_q(\bs Z_q^{\bs j};{\bs \lambda})=\sum_{p=1}^d \lambda_p \psi_q(Z_q^{j_p}),\quad \bs j\in \mathcal J^d,\;\; 1\le q\le N,
\end{equation}
where we denote $z_q^{\bs j}=a_q^{\bs j}, e_q^{\bs j}, \tilde e_q^{\bs j}$
for $\bs Z_q^{\bs j}=(Z_q^{j_1},\ldots, Z_q^{j_d})= \bs c_q^{\bs j}, \bs v_q^{\bs j},\tilde{\bs v}_q^{\bs j},$ respectively.
We sample $f\in C([0,1]^d)$ at the hypercube centers and evenly distribute them to $N$ levels:
\begin{equation}\label{aqJaj2-A0}
 b_q^{\bs j}:={f(\bs c_q^{\bs j})}/N,\quad 1\le q\le N.
\end{equation}
The mapped centers 
will serve as the abscissas in $[0,|\bs \lambda|_1]$ for interpolating  
$b_q^{\bs j}$ sampled from the centers $\bs c_q^{\bs j}$ to explicitly construct outer functions. For $d\ge 2$ and fixed $N$ (with $h=1/N$ and $\delta=h^2$), the distributions of  
$\{z_q^{\bs j}\}$ depends on the parameters $\bs \lambda$ and $\nu$. \wll{The critical issue is {\em whether there exist parameters $\bs \lambda$ and $\nu$ such that  $\{a_q^{\bs j}\}$ are well separated and, more importantly, such that whenever two hypercubes $\bs C_q^{\bs j},\, \bs C_q^{\bs j'}$ are neighbors, the corresponding images $z_q^{\bs j},\, z_q^{\bs j'}$ {\rm(}with $z=a,e,\tilde e${\rm)}  remain close to each other.} Otherwise, we say the data are dislocated.}  Much of this section focuses on identifying such parameters so as to avoid or mitigate dislocations.

\subsection{Separation of  hypercubes under the Kolmogorov maps}
For notational convenience, we sometimes denote  
\begin{equation}\label{zZjk} 
z_{q,k}^{\bs j,j_p}:=z_q^{(j_1,\ldots,j_{p-1},k, j_{p+1},\ldots, j_d)},\quad  \bs Z_{q,k}^{\bs j,j_p}:=\bs Z_q^{(j_1,\ldots,j_{p-1},k, j_{p+1},\ldots, j_d)},
\end{equation}
with fixed $j_l,\, l\not=p$ and $j_p=k,$ for 
$z=a,e,\tilde e$ and $\bs Z=\bs c, \bs v, \tilde {\bs v}.$ 
Clearly, by definition (see \eqref{cqjcenter}-\eqref{cqjc-00}), we have $v_q^{j_p}<c_q^{j_p}<\tilde v_q^{j_p}$ for all $1\le p\le d,$ so the strict monotonicity of the inner functions implies that for $\lambda_p>0,$ 
 \begin{equation}\label{lambdapsi} 
 \lambda_p \psi_q(v_q^{j_p}) <\lambda_p \psi_q(c_q^{j_p}) < 
 \lambda_p \psi_q(\tilde v_q^{j_p})\;\;\; \Rightarrow\;\;\; 
 e_q^{\bs j}< a_q^{\bs j}< \tilde e_q^{\bs j}\,.
 \end{equation}
 Moreover, all points $\bs x\in \bs C_q^{\bs j}$ are mapped to the sub-interval $[e_q^{\bs j},\, \tilde e_q^{\bs j}],$ that is,  
\begin{equation}\label{mon-hyperb}
e_q^{\bs j}=\Psi_q(\bs v_q^{\bs j};\bs \lambda)  \le \Psi_q(\bs x;\bs \lambda) \le  \Psi_q(\tilde{\bs v}_q^{\bs j};\bs \lambda)=\tilde e_q^{\bs j},\quad \forall\, \bs x\in \bs C_q^{\bs j}.
\end{equation}
Thus, we can control each hypercube $\bs C_q^{\bs j}$ via the pair $\{e_q^{\bs j},\, \tilde e_q^{\bs j}\}.$  

We also readily find from the strict monotonicity of the inner functions that the mapped centers increase row-wise. For example, for the interior hypercubes, we have
\begin{equation}\label{distanceajj-00}
 a_{q,k+1}^{\bs j,j_p}-a_{q,k}^{\bs j,j_p}=\lambda_p h, \quad k=0,\ldots, N-3.
\end{equation} 
Indeed, by  Definition \ref{DefnIK} and \eqref{cqjcenter}-\eqref{aqJaj}, 
\[
\begin{split}
a_{q,k}^{\bs j,j_p}=\lambda_p\, \psi_q(c_q^{k})+\sum_{l\not=p} \lambda_l\, \psi_q(c_q^{j_l}),\quad 
a_{q,k+1}^{\bs j,j_p}=\lambda_p\, \psi_q(c_q^{k+1})+\sum_{l\not=p} \lambda_l\, \psi_q(c_q^{j_l}),
\end{split}
\]
which yields
\begin{equation*}
\begin{split}
a_{q,k+1}^{\bs j,j_p}-a_{q,k}^{\bs j,j_p} & =\lambda_{p}\,\big(\psi_q(c_q^{k+1})-\, \psi_q(c_q^{k})\big)= \lambda_{p}\,\big(\phi((k+1) h+(h+\delta)/2)\\ &\quad  -\phi(kh+(h+\delta)/2))\big)=\lambda_{p}\,h>0.
\end{split}
\end{equation*}
In view of this, we therefore adopt the row-major ordering of $\bs Z_{q,k}^{\bs j, j_p}$ and $z_{q,k}^{\bs j, j_p}$ in  \eqref{zZjk},   
where one index runs through its range while all other indices are fixed.   
For example, for $d=2,$ we introduce 
\begin{equation}\label{ufform}
\begin{split}
&{\bs A_q}=\big(\underbrace{a_q^{(-1,-1)},\cdots, a_q^{(N-1,-1)}}_{j_2=-1}, \, \underbrace{a_q^{(-1,0)},\cdots, a_q^{(N-1,0)}}_{j_2=0},\cdots,
\underbrace{a_q^{(-1,N-1)},\cdots, a_q^{(N-1,N-1)}}_{j_2=N-1}\big),
\end{split}
\end{equation} 
and 
\begin{align}
{\bs E_q} &=\big(\underbrace{e_q^{(-1,-1)}, \tilde e_q^{(-1,-1)}, e_q^{(0,-1)}, \tilde e_q^{(0,-1)},\cdots, e_q^{(N-2,-1)}, \tilde e_q^{(N-2,-1)}, e_q^{(N-1,-1)}, \tilde e_q^{(N-1,-1)}}_{j_2=-1}, \, \notag\\
&\qquad \underbrace{e_q^{(-1,0)}, \tilde e_q^{(-1,0)}, e_q^{(0,0)}, \tilde e_q^{(0,0)},\cdots, e_q^{(N-2,0)}, \tilde e_q^{(N-2,0)}, e_q^{(N-1,0)}, \tilde e_q^{(N-1,0)}}_{j_2=0}, \notag\\
&\qquad \hspace*{5cm} \vdots \notag \\
&\qquad \underbrace{e_q^{(-1,N-1)}, \tilde e_q^{(-1,N-1)}, e_q^{(0,N-1)}, \tilde e_q^{(0,N-1)}, \cdots, e_q^{(N-1,N-1)}, \tilde e_q^{(N-1,N-1)}}_{j_2=N-1}\big). \label{Efform-order}
\end{align}
Likewise, we define $\bs B_q$ with $b_q^{\bs j}$ in place of $a_q^{\bs j},$ and extend these sequences to higher dimensions.       

We next determine the conditions on $h,\nu$ and $\bs \lambda$ so that the sequences  $\bs A_q, \bs E_q$ are strictly increasing.  
\begin{lemma}\label{lmm:Hypercube-Sep} Let $d\ge 2,$ $h>\nu>0$ and  $\bs \lambda=(\lambda_1, \ldots, \lambda_d)$ be a  sequence of positive real numbers. 
Let $\bs E_q$ be the sequence formed by $\{e_q^{\bs j}, \tilde e_q^{\bs j}\}$ in the order induced by the row-major ordering of the corresponding hypercube corners $\{\bs v_q^{\bs j}, 
\tilde {\bs v}_q^{\bs j}\}$ in \eqref{mon-hyperb}. Then the sequence $\bs E_q$ is strictly increasing, if 
\begin{equation}\label{lmbdcondforDdim}
\begin{split}
\lambda_1 \nu > (h-\nu)\sum_{k=2}^d \lambda_k;
\quad 
\lambda_p \nu >
\sum\limits_{k=1}^{p-1}\lambda_k +(h-\nu)\sum\limits_{\ell=p+1}^d \lambda_\ell,\;\;\; 2\le p\le d,
\end{split}
\end{equation}   
so is the sequence $\bs A_q$ formed by arranging the mapped centers
$\{a_q^{\bs j}\}$ in row-major order.  
\end{lemma}
\begin{proof}
In view of \eqref{mon-hyperb}, if $\lambda_p>0$ and $h>\nu>0,$ then we have 
$e_q^{\bs j}<\tilde e_q^{\bs j}.$ Thus, it suffices to identify conditions ensuring that, for any neighboring pair  $\{\tilde e_q^{\bs j}, e_q^{\bs j'}\}\in \bs E_q,$ one has 
$\tilde e_q^{\bs j}<e_q^{\bs j'}.$ Moreover, by \eqref{lambdapsi}, the strict monotonicity of $\bs A_q$ is an immediate consequence.  

For clarity, we first consider $d=2$ and examine the mapped corners of the hypercubes in the same row (i.e., with fixed $j_2$) in  \eqref{Efform-order}. We find from
 \eqref{intercond}, \eqref{phiqibnd}, Definition \ref{Inner-fun},
 \eqref{lower-corner1} and \eqref{mon-hyperb} that\\[4pt] 
 (i) for $0\leq j_2\leq N-2$  (the corner coordinates given by \eqref{lower-corner1}),  
\begin{equation}\label{sepneighbsupercubes0}
\begin{split}
 e_q^{(j_1+1,j_2)}-\tilde e_q^{(j_1,j_2)}&=
\lambda_1(\psi_q(v_q^{j_1+1})-\psi_q(\tilde v_q^{j_1}))
+\lambda_2(\psi_q( v_q^{j_2})-\psi_q(\tilde v_q^{j_2}))
\\&
=\lambda_1(\phi(t_{j_1+1}^\delta)-\phi(t_{j_1+1}))
+\lambda_2(\phi(t_{j_2}^\delta)-\phi(t_{j_2+1}))
\\&=\lambda_1\nu +\lambda_2 (\nu-h);
\end{split}
\end{equation}
(ii) for $j_2=-1$ (i.e., $v_q^{j_2}=0$),   
\begin{equation}\label{sepneighbsupercubes-1}
\begin{split}
 e_q^{(j_1+1,-1)}&-\tilde e_q^{(j_1,-1)}
=\lambda_1(\psi_q( v_q^{j_1+1})-\psi_q(\tilde v_q^{j_1}))
+\lambda_2(\psi_q( v_q^{-1})-\psi_q(\tilde v_q^{-1}))
\\&
=\lambda_1(\phi(t_{j_1+1}^\delta)-\phi(t_{j_1+1}))
+\lambda_2(-(\phi(0)-\phi((1-q)\delta)))
\\& =\lambda_1\nu +\lambda_2 \phi((1-q)\delta)\geq \lambda_1\nu +\lambda_2 (\nu-h);
\end{split}
\end{equation}
 (iii)
for 
$j_2=N-1$ (i.e., $\tilde v_q^{j_2}=1$), 
\begin{equation}\label{sepneighbsupercubesJ-1}
\begin{split}
 e_q^{(j_1+1,N-1)} &-\tilde e_q^{(j_1,N-1)}
=\lambda_1(\psi_q( v_q^{j_1+1})-\psi_q(\tilde v_q^{j_1}))
+\lambda_2(\psi_q( v_q^{N-1})-\psi_q(\tilde v_q^{N-1}))
\\&
=\lambda_1(\phi(t_{j_1+1}^\delta)-\phi(t_{j_1+1}))
+\lambda_2(\phi(t_{N-1}^{\delta})-\phi((1-q)\delta)-\phi(1))
\\& 
=\lambda_1\nu + \lambda_2 (\nu-h-\phi((1-q)\delta))\\
& \ge \lambda_1\nu +\lambda_2 (\nu-h),
\end{split}
\end{equation}
where $-1\le j_1\le N-2$ and $1\le q\le N$ in   
\eqref{sepneighbsupercubes0}-\eqref{sepneighbsupercubesJ-1}. Thus,  we require
$
{\lambda_1}\nu > (h-\nu)\lambda_2$ to separate the hypercubes in each row.  We next ensure the tails  and heads for two consecutive rows
with $j_2, j_2+1$ in $\bs E_q$ are in strictly increasing order, which requires
\begin{equation}\label{sepneighbrow}
\begin{split}
 e_q^{(-1,j_2+1)} -\tilde e_q^{(N-1,j_2)}&=\lambda_1(\psi_q( v_q^{-1})-\psi_q(\tilde v_q^{N-1}))
+\lambda_2(\psi_q( v_q^{j_2+1})-\psi_q(\tilde v_q^{j_2}))\\&
=\lambda_1(\phi(0)-\phi(1))
+\lambda_2(\phi(t_{j_2+1}^{\delta})-\phi(t_{j_2+1}))\\
&=-\lambda_1+\lambda_2\nu> 0, 
\end{split}
\end{equation}
for $-1\leq j_2\leq N-2,$ 
 which implies  ${\lambda_2}\nu>{\lambda_1}.$  Thus, a combination of the above two conditions leads to \eqref{lmbdcondforDdim} with $d=2.$

Similarly, in the three-dimensional case, we arrange the mapped corners of the hypercubes in row-major ordering as 
\begin{equation*}\label{ufform3D}
\begin{split}
{\bs E_q} &=\big(\underbrace{ e_q^{(-1,-1,-1)},\tilde e_q^{(-1,-1,-1)},\cdots,  e_q^{(N-1,-1,-1)},\tilde e_q^{(N-1,-1,-1)}}_{j_2=j_3=-1}, \\[6pt]& \qquad  \underbrace{e_q^{(-1,0,-1)},\tilde e_q^{(-1,0,-1)},\;\; \cdots,\;\;  e_q^{(N-1,0,-1)},\tilde e_q^{(N-1,0,-1)}}_{j_2=0,\, j_3=-1},\; \cdots,\; \\[6pt]&
\qquad 
 \underbrace{ e_q^{(-1,N-1,N-1)},\tilde e_q^{(-1,N-1,N-1)},\cdots,  e_q^{(N-1,N-1,N-1)},\tilde e_q^{(N-1,N-1,N-1)}}_{j_2=j_3=N-1}\big)\in {\mathbb R}^{(2(J+1))^3},
\end{split}
\end{equation*}
for $2\le q\le N-1.$ Likewise, for $q=1,N,$ all $J^3$ hypercubes are interiors, so we can define ${\bs E_q}$ by removing elements and rows as in two dimensions. 
Like \eqref{sepneighbsupercubes0}-\eqref{sepneighbsupercubesJ-1}, we first consider  
the mapped corners of the hypercubes in the same row (i.e.,  for fixed $j_2, j_3$) and  obtain 
\begin{equation}\label{3Dsepneighbsupercubes0}
\begin{split}
& e_q^{(j_1+1,j_2,j_3)}-\tilde e_q^{(j_1,j_2,j_3)}\\& =
\lambda_1(\psi_q( v_q^{j_1+1})-\psi_q(\tilde v_q^{j_1}))
+\lambda_2(\psi_q( v_q^{j_2})-\psi_q(\tilde v_q^{j_2}))+\lambda_3(\psi_q( v_q^{j_3})-\psi_q(\tilde v_q^{j_3}))
\\[4pt]&=
\begin{dcases}
\lambda_1\nu +(\lambda_2+\lambda_3) (\nu-h), & 0\leq j_2,j_3\leq N-2,\\
\lambda_1\nu +(\lambda_2+\lambda_3)\phi((1-q)\delta),& j_2=j_3=-1,\\
\lambda_1\nu +\lambda_2\phi((1-q)\delta)+\lambda_3(\nu-h-\phi((1-q)\delta)),\quad & j_2=-1,j_3=N-1,\\
\lambda_1\nu +\lambda_2(\nu-h-\phi((1-q)\delta))+\lambda_3\phi((1-q)\delta),& j_2=N-1,j_3=-1,\\
\lambda_1\nu +(\lambda_2+\lambda_3) (\nu-h-\phi((1-q)\delta)),& j_2=j_3=N-1,
\end{dcases}\\[4pt]
&\geq \lambda_1\nu +(\lambda_2+\lambda_3) (\nu-h),
\end{split}
\end{equation}
which is exactly \eqref{sepneighbsupercubes0}–\eqref{sepneighbsupercubesJ-1} with $\lambda_2+\lambda_3$ in place of $\lambda_2$.  Thus, each row in $\bs E_p$ is increasing if the first condition of \eqref{lmbdcondforDdim} holds.

We next ensure the tails and heads for two consecutive rows
with $j_2, j_2+1$ and $j_3, j_3+1$ in $\bs E_q$ are in strictly increasing order. Indeed, we find \\[4pt]
(i) for any $j_2\in \{-1,\ldots, N-2\}$,  
\begin{equation*}\label{bqheadtail3d}
\begin{split}
& e_q^{(-1,j_2+1,j_3)}-\tilde e_q^{(N-1,j_2,j_3)}\\
&=\lambda_1(\psi_q( v_q^{-1})-\psi_q(\tilde v_q^{N-1}))
+\lambda_2(\psi_q( v_q^{j_2+1})-\psi_q(\tilde v_q^{j_2}))+\lambda_3(\psi_q( v_q^{j_3})-\psi_q(\tilde v_q^{j_3}))
\\[6pt]&=
\begin{dcases}
-\lambda_1+\lambda_2\nu+\lambda_3(\nu-h), & 0\leq j_3\leq N-2,\\[4pt]
-\lambda_1+\lambda_2\nu+\lambda_3\phi((1-q)\delta),& j_3=-1,\\[4pt]
-\lambda_1+\lambda_2\nu+\lambda_3 (\nu-h-\phi((1-q)\delta)),\quad & j_3=N-1,
\end{dcases}\\[4pt]
&\geq-\lambda_1+\lambda_2\nu+\lambda_3(\nu-h),
\end{split}\end{equation*}
which is positive, if ${\lambda_2}\nu>{\lambda_1+\lambda_3(h-\nu)}$.\\[4pt]
(ii) for any  
$j_3\in \{-1,\ldots, N-2\}$, 
\begin{equation*}\begin{split}
& e_q^{(-1,-1,j_3+1)}-\tilde e_q^{(N-1,N-1,j_3)}\\
&=\lambda_1(\psi_q( v_q^{-1})-\psi_q(\tilde v_q^{N-1}))
+\lambda_2(\psi_q( v_q^{-1})-\psi_q(\tilde v_q^{N-1}))+\lambda_3(\psi_q( v_q^{j_3+1})-\psi_q(\tilde v_q^{j_3}))\\&
=\lambda_1(\phi(0)-\phi(1))
+\lambda_2(\phi(0)-\phi(1))+\lambda_3(\phi(t_{j_3+1}^{\delta})-\phi(t_{j_3+1}))\\
&=-(\lambda_1+\lambda_2)+\lambda_3\nu> 0,
\end{split}\end{equation*}
which implies the condition $\lambda_3\nu> \lambda_1+\lambda_2.$
Combining all conditions 
yields   \eqref{lmbdcondforDdim} with $d=3.$

Finally, for $d\ge 4,$ we need to impose conditions to  guarantee 
the mapped corners of the hypercubes in the same row (i.e.,  for fixed $j_2, j_3,\ldots,j_d$) in strictly increasing order as an extension of \eqref{3Dsepneighbsupercubes0}:
\begin{equation}\label{eqanydd}
\begin{split}
e_q^{(j_1+1,j_2,j_3\ldots,j_d)}-\tilde e_q^{(j_1,j_2,j_3\ldots,j_d)}
&\geq \lambda_1\nu +(\lambda_2+\cdots+\lambda_d)(\nu-h).
\end{split}
\end{equation}
Similar to (i)-(ii) in three dimensions, we also need to make sure  the mapped tails and heads for two consecutive rows are in strictly increasing order:
\begin{equation*}\label{2maincondforanyd}
\begin{dcases}
\tilde e_q^{(N-1,j_2,j_3\ldots,j_d)}<
 e_q^{(-1,j_2+1,j_3,\ldots,j_d)}, & -1\le j_2\le  N-2,   \\
\tilde e_q^{(N-1,N-1,j_3\ldots,j_d)}<
 e_q^{(-1,-1,j_3+1,\ldots,j_d)}, & -1\le j_3\le  N-2,  \\
\qquad \qquad \vdots\\
\tilde e_q^{(N-1,N-1,\ldots,N-1,j_d)}<
 e_q^{(-1,-1,\ldots,-1,j_d+1)}, & -1\le j_d\le  N-2,\end{dcases}
\end{equation*}
where the other indices for each case are fixed, if not specified. Correspondingly,  direct calculation as before leads to 
\begin{equation}\label{secondcondforanydd}
\begin{dcases}
 e_q^{(-1,j_2+1,j_3,\ldots,j_d)}-\tilde e_q^{(N-1,j_2,j_3\ldots,j_d)}
>-\lambda_1+\lambda_2\nu+(\lambda_3+\cdots+\lambda_d)(\nu-h),
  \\[6pt]
e_q^{(-1,-1,j_3+1,\ldots,j_d)}-\tilde e_q^{(N-1,N-1,j_3\ldots,j_d)}>-(\lambda_1+\lambda_2)+\lambda_3\nu+(\lambda_4+\cdots+\lambda_d)(\nu-h), \\
\qquad \qquad \vdots\\
e_q^{(-1,-1,\ldots,-1,j_d+1)}-\tilde e_q^{(N-1,N-1,\ldots,N-1,j_d)}>-(\lambda_1+\lambda_2+\ldots+\lambda_{d-1})+\lambda_d\nu.
\end{dcases}
\end{equation}
Thus, we infer from 
\eqref{eqanydd}-\eqref{secondcondforanydd} the conditions summarized in \eqref{lmbdcondforDdim}. This completes the proof.
\end{proof}


Without loss of generality, we assume that $|\bs \lambda|_1 = 1$, as this normalization can always be attained by replacing $\lambda_p$ with $\lambda_p / |\bs \lambda|_1$ whenever $|\bs \lambda|_1 \ne 1$. 
Defining
\begin{equation}\label{SpQp}
  {\mathcal S}_p=\sum_{k=p}\lambda_k,\quad 1\le p\le d \quad{\rm with}\quad {\mathcal S}_1=|\bs \lambda|_1=1,  
\end{equation}
we have
\begin{equation}\label{lmdpS-A}
    \lambda_p={\mathcal S}_p-{\mathcal S}_{p+1},\quad 1\le p\le d-1;\quad \lambda_d={\mathcal S}_d.
\end{equation}
As $|\bs \lambda|_1=1,$ we can reformulate the condition \eqref{lmbdcondforDdim} as 
\begin{equation}\label{lmbdcondforDdimA}
\begin{split} 
\lambda_p >\frac 1{\nu+1} + 
\dfrac{h-\nu-1}{\nu+1} \sum_{k=p+1}^d\lambda_k,\quad  1\le p\le d,   
\end{split}
\end{equation}  
where  for $p=1,d,$ we understand 
\begin{equation}\label{lmbd1D}
\lambda_1 >\frac {h-\nu} h,\quad \lambda_d >\frac 1{\nu+1}.
\end{equation}
Then  \eqref{lmbdcondforDdimA} is equivalent to 
\begin{equation}\label{lambdapS}
{\mathcal S}_{p}-{\mathcal S}_{p+1}=\lambda_p>\frac{1}{\nu+1}-\frac{\nu+1-h}{\nu+1}{\mathcal S}_{p+1},
\end{equation}
for $1 \le p\le d-1,$ which equivalently yields 
\begin{equation}\label{Sp-recurr}
{\mathcal S}_p>\frac{h}{\nu+1}{\mathcal S}_{p+1}+\frac{1}{\nu+1} \quad \Leftrightarrow  \quad {\mathcal S}_{p+1}<\frac{\nu+1}{h}{\mathcal S}_p-\frac{1}{h}.
\end{equation}

Correspondingly, for fixed $h>0$ and any $\nu\in (0,h),$ we introduce the admissible set 
\begin{equation}\label{admiss-lam-S}
\begin{split}
  {\mathbb A}_h(\nu)&:=\big\{\bs \lambda\in \mathbb R_+^d\,:\, \bs \lambda ~\, \text{satisfies} \,~\, \eqref{lmbdcondforDdim} ~\, \text{and} ~ |\bs \lambda|_1=1  \big\}\\
  &= \big\{\bs {\mathcal S}\in \mathbb R_+^d\,:\, \bs {\mathcal S} ~ \text{satisfies} ~ \eqref{Sp-recurr}~ \text{and}~ {\mathcal S}_1=1  \big\},
  \end{split}
\end{equation}
where $\mathbb R_+$ denotes the set of all positive real numbers and $\bs {\mathcal S}=({\mathcal S}_1,\ldots, {\mathcal S}_d).$
In practical construction, we need to explicitly choose  $\bs\lambda.$ To this end, we first determine the range of ${\mathcal S}_p$ for any $\nu\in (0,h).$ More precisely, we can show that $Q_p<{\mathcal S}_p<\widetilde Q_p,$ where 
\begin{equation}\label{Q-formula}
Q_p=\frac 1 \zeta-\frac 1\zeta \Big(\frac{h}{\nu+1}\Big)^{d-p+1}, 
\quad\;  \widetilde Q_p=
\frac 1 \zeta -\frac{1-\zeta}\zeta  \Big(\frac{h}{\nu+1}\Big)^{1-p},
\end{equation} 
and $\zeta:=\nu+1-h.$ Indeed, corresponding to \eqref{Sp-recurr}, we introduce the backward and forward recurrence relations:
\begin{equation}\label{QpQp}
    Q_p=\frac{h}{\nu+1}Q_{p+1}+\frac{1}{\nu+1}; \quad 
    \widetilde Q_{p+1}=\frac{\nu+1}{h}\widetilde Q_{p}-\frac 1 h,
\end{equation}
 with the initial values
$Q_d:=\tfrac{1}{\nu+1}$ and $\widetilde Q_1:=1,$ respectively. We can derive \eqref{Q-formula} from induction straightforwardly.  
We further verify $Q_p<{\mathcal S}_p<\widetilde Q_p$ by induction. Clearly,  
for $p=d$, ${\mathcal S}_d=\lambda_d>\tfrac{1}{\nu+1}=Q_d.$ 
For $1\leq p\leq d-1$, we assume ${\mathcal S}_{p+1}>Q_{p+1}$ holds, then  by \eqref{Sp-recurr}, 
\begin{equation*}
{\mathcal S}_p>\frac{1}{\nu+1}+\frac{h}{\nu+1}{\mathcal S}_{p+1}>\frac{1}{\nu+1}+\frac{h}{\nu+1}Q_{p+1}=Q_p.
\end{equation*}
Similarly, we can show ${\mathcal S}_p<\widetilde Q_p,$  leading to the bounds of ${\mathcal S}_p$ in  \eqref{Q-formula}.  
In particular, for $p=2,$  we obtain immediately that 
\begin{equation*}
Q_2=\frac{1-(\frac{h}{\nu+1})^{d-1}}{\nu+1-h}<{\mathcal S}_2<\frac{\nu}{h}=\widetilde Q_2.
\end{equation*}
We take $\nu$ as the positive root of the quadratic  equation: 
\begin{equation}\label{mueqn} 
    \frac{1-(\frac{h}{h+1})^{d-1}}{\nu+1-h}=\frac{\nu}{h}, 
\end{equation}
that is,  
\begin{equation}\label{qAeqnA} 
 \nu=\nu^*:=-\frac{1-h}{2}+ \frac{1-h}{2}\sqrt{1+ \frac 
    {4h}{(1-h)^2}\Big(1 - \Big(\frac{h}{h+1}\Big)^{d-1}\Big)}\,,
\end{equation}
which allows us to explicitly determine a set of  
$\{\lambda_p^*\}_{p=1}^d$ for practical use.   For clarity, for $\nu=\nu^*,$
we denote ${\mathcal S}_p^*={\mathcal S}_p,\, Q_p^*=Q_p,$  and likewise for other notation.

\begin{theorem}\label{Hypercube-Sep} For $d\ge 2$ and   $0<h<1,$ we take $\nu=\nu^*$ in \eqref{qAeqnA}, and set 
\begin{equation}\label{Spcurrent}
{\mathcal S}_1^*:=1,\quad {\mathcal S}_{p+1}^*=\frac{1}{2}\Big(Q^*_{p+1}+\frac{\nu^*+1}{h}{\mathcal S}_p^*-\frac{1}{h}\Big),\quad 1\le p\le d-1,
\end{equation}
with $Q_d^*=\tfrac 1 {\nu^*+1}.$ Letting 
\begin{equation}\label{lmdpS}
    \lambda_p^*={\mathcal S}_p^*-{\mathcal S}_{p+1}^*,\quad 1\le p\le d-1;\quad \lambda_d^*={\mathcal S}_d^*,
\end{equation}
we have $\bs \lambda^*\in {\mathbb A}_h(\nu^*),$ and 
\begin{equation}\label{def_d_lambda}
    \lambda_p^*=
        \frac{h^{d-p}}{(\nu^*+1)^{d-p+1}}+\chi_p\frac{\nu^*-h+(\frac{h}{\nu^*+1})^d}{\nu^*+1-h}\Big(\frac{\nu^*+1}{2h}\Big)^{p-1},
\end{equation}
where $\chi_p=\tfrac{2h-\nu^*-1}{2h}$ for $1\le p\le d-1$ and $\chi_p=1$ for $p=d$.
With this choice, the corresponding sequence $\bs E_q$ is strictly increasing. 
\end{theorem}
\begin{proof}
We first show that  $\nu^*\in (0,h).$ We rewrite \eqref{mueqn} as
$$
Q(\nu):=\nu^2 +(1-h)\nu- h+h   (\tfrac{h}{h+1})^{d-1}=0,
$$
It is clear that $Q(0)<0,\, Q(h)>0$ and $Q(h-1)>0,$ so 
$Q(\nu)$ has a unique positive root in $(0,h).$
With the choice \eqref{Spcurrent}, we have 
\begin{equation*}
\begin{split}
{\mathcal S}_{p+1}^*&=\frac{1}{2}\Big(Q^*_{p+1}+\frac{\nu^*+1}{h}{\mathcal S}_p^*-\frac{1}{h}\Big)=\frac{1}{2}\Big(\frac{\nu^*+1}{h}Q^*_p-\frac{1}{h}+\frac{\nu^*+1}{h}{\mathcal S}_p^*-\frac{1}{h}\Big)\\
&<\frac{\nu^*+1}{h}{\mathcal S}_p^*-\frac{1}{h},
\end{split}
\end{equation*}
which implies $\bs S^*\in {\mathbb A}_h(\nu^*)$ in
\eqref{admiss-lam-S}, so is $\bs \lambda^*.$  
Moreover,  using \eqref{Spcurrent}  and the first formula in
\eqref{QpQp}, we find
\begin{equation*}
\begin{aligned}
{\mathcal S}_{p+1}^*-Q_{p+1}^*&=\frac{1}{2}\bigg(\frac{\nu^*+1}{h}Q^*_p-\frac{1}{h}+\frac{\nu^*+1}{h}{\mathcal S}_p^*-\frac{1}{h}\bigg)-\bigg(\frac{\nu^*+1}{h}Q^*_p-\frac{1}{h}\bigg) \\
&=\frac{\nu^*+1}{2h}({\mathcal S}_{p}^*-Q_{p}^*),
\end{aligned}
\end{equation*}
which implies
$${\mathcal S}_{p}^*-Q_{p}^*=\Big(\frac{\nu^*+1}{2h}\Big)^{p-1}({\mathcal S}_1^*-Q_1^*)=\Big(\frac{\nu^*+1}{2h}\Big)^{p-1}(1-Q_1^*).$$
Then by \eqref{lmdpS},
we compute 
\begin{equation}
\begin{aligned}
\lambda_p^* &={\mathcal S}_p^*-{\mathcal S}_{p+1}^*\\
&=(Q_p^*-Q_{p+1}^*)+({\mathcal S}_p^*-Q_p^*)-({\mathcal S}_{p+1}^*-Q_{p+1}^*)\\
&=(Q_p^*-Q_{p+1}^*)+(\tfrac{\nu^*+1}{2h})^{p-1}(1-Q_1^*)-(\tfrac{\nu^*+1}{2h})^{p}(1-Q_1^*)\\
&=(Q_p^*-Q_{p+1}^*)+\tfrac{2h-\nu^*-1}{2h}(\tfrac{\nu^*+1}{2h})^{p-1}(1-Q_1^*).
\end{aligned}
\end{equation}
for $1\le p\le d-1$, and
$$\lambda_d^*={\mathcal S}_d^*=Q_d^*+({\mathcal S}_d^*-Q_d^*)=Q_d^*+(\tfrac{\nu^*+1}{2h})^{d-1}(1-Q_1^*).$$
Inserting the expressions of $Q_p=Q_p^*$  in \eqref{Q-formula} into the above, we obtain \eqref{def_d_lambda} and complete the proof.
\end{proof} 

\begin{corollary}\label{lamda-small} Let $\nu^*$ and  $\bs \lambda^*=(\lambda_1^*,\ldots,\lambda_d^*)$ be given in \eqref{qAeqnA} and \eqref{def_d_lambda}, respectively. Then for  $0<h\ll 1,$  we have the asymptotics
 \begin{equation}\label{nu-asym}
 \nu^*=h-h^d+O(h^{d+1}), 
 \end{equation}
 and 
 \begin{equation}\label{lamda-asym}
 \lambda_p^*=h^{d-p}+O(h^{d-p+1}),\quad  1\le p\le d-1;\quad  
 \lambda_d^*=1-h+O(h^2).
 \end{equation}
\end{corollary}
\begin{proof} Define $\varepsilon=\varepsilon(h)=h-\nu^*.$ Then from \eqref{mueqn}, we obtain the equation of $\varepsilon:$
$$(1+h)^{d-1}\,\varepsilon^2-(1+h)^d\,\varepsilon+h^d=0.$$
It is clear that  \(\varepsilon(0)=0\). Differentiating this identity \(k\) times and taking \(h=0\) for \(1\le k\le d-1\), 
 we have $\varepsilon^{(k)}(0)=0$,
while differentiating \(d\) times gives
$\varepsilon^{(d)}(0)=d!.$
Thus, using Taylor's theorem yields
$\varepsilon=h^d+O(h^{d+1})$
and \eqref{nu-asym}.

We now examine $\bs \lambda^*$. The first term of \eqref{def_d_lambda} is $h^{d-p}+O(h^{d-p+1})$; and, in particular, for $p=d,$ it becomes $\tfrac{1}{\nu^*+1}=1-h+O(h^2).$ 
As $\nu^*-h=-h^d+O(h^{d+1})$, we have
$$\frac{\nu^*-h+(\frac{h}{\nu^*+1})^d}{\nu^*+1-h}
=\frac{-h^d+O(h^{d+1})+h^d(1-dh+O(h^2))}{\nu^*+1-h}=O(h^{d+1}),$$
Therefore, the second term in \eqref{def_d_lambda} is $O(h^{d-p+1})$ for $1\le p\le d-1$ and $O(h^2)$ when $p=d$.
\end{proof}

\begin{rem}\label{Cond-lambda} {\em In fact, if one only requires $\bs A_q$ to be increasing, the condition
\eqref{lmbdcondforDdim} can be substantially relaxed, especially the constraint
\eqref{nu-asym}. Coincidentally, the choice
$\lambda_p^*=O(N^{p-d})$ is analogous to that in Sprecher
\cite{Sprecher1997numerical}, although the construction of the inner functions
therein, based on K\"oppen's recursive relation, is quite different from ours. \qed}
\end{rem}

\wll{We provide in Table \ref{nulamba-A} some samples of $\nu^*$ and $\bs \lambda^*$ for 
$d=5$ and various $N$ with $h=1/N$ computed from \eqref{qAeqnA}  and \eqref{def_d_lambda}.}
\begin{table}[htbp]
\small
\centering
\caption{\small Values of $\nu^*$ and $\bs \lambda^*$ for $d=5$ and various $N$.}\label{nulamba-A}
\setlength{\tabcolsep}{10pt}
\resizebox{\textwidth}{!}{%
\begin{tabular}{c p{4.0cm} p{4.0cm} p{4.0cm}}
\hline
$N$ & \centering $\nu^*$ & \centering $\lambda_1^*$ & \centering\arraybackslash $\lambda_2^*$ \\
\hline
32   & \centering $3.124997444768155\mathrm{e}{-2}$   & \centering $8.176742323309422\mathrm{e}{-7}$   & \centering\arraybackslash ${2.698324964602761\mathrm{e}{-5}}$ \\
64   & \centering ${1.562499913814696\mathrm{e}{-2}}$   & \centering $5.515859456731126\mathrm{e}{-8}$   & \centering\arraybackslash $3.585308646827693\mathrm{e}{-6}$ \\
128  & \centering $7.812499972006871\mathrm{e}{-3}$  & \centering $3.583120537080563\mathrm{e}{-9}$   & \centering\arraybackslash $4.622225492832924\mathrm{e}{-7}$ \\
256  & \centering ${3.906249999108063\mathrm{e}{-3}}$  & \centering $2.283359788964226\mathrm{e}{-10}$  & \centering\arraybackslash $5.868234657638041\mathrm{e}{-8}$ \\
512  & \centering ${1.953124999971854\mathrm{e}{-3}}$  & \centering $1.441063556854805\mathrm{e}{-11}$  & \centering\arraybackslash $7.392656046665147\mathrm{e}{-9}$ \\
\hline
\end{tabular}%
}


\resizebox{\textwidth}{!}{%
\begin{tabular}{c p{4.0cm} p{4.0cm} p{4.0cm}}
\hline
$N$ & \centering $\lambda_3^*$ & \centering $\lambda_4^*$ & \centering\arraybackslash $\lambda_5^*$ \\
\hline
32   & \centering ${8.904472269424150\mathrm{e}{-4}}$  & \centering ${2.938475793733965\mathrm{e}{-2}}$   & \centering\arraybackslash ${9.696969939118396\mathrm{e}{-1}}$ \\
64   & \centering ${2.330450619433748\mathrm{e}{-4}}$  & \centering ${1.514792901662832\mathrm{e}{-2}}$   & \centering\arraybackslash ${9.846153854541869\mathrm{e}{-1}}$ \\
128  & \centering ${5.962670885671015\mathrm{e}{-5}}$ & \centering ${7.691845442354954\mathrm{e}{-3}}$  & \centering\arraybackslash ${9.922480620431185\mathrm{e}{-1}}$ \\
256  & \centering ${1.508136307012304\mathrm{e}{-5}}$ & \centering ${3.875910309019035\mathrm{e}{-3}}$  & \centering\arraybackslash ${9.961089494172283\mathrm{e}{-1}}$ \\
512  & \centering $3.792432551939167\mathrm{e}{-6}$ & \centering ${1.945517899144752\mathrm{e}{-3}}$  & \centering\arraybackslash ${9.980506822612366\mathrm{e}{-1}}$ \\
\hline
\end{tabular}%
}
\end{table}

\subsection{Construction of outer functions and approximate version} 
Now, we are ready to construct the outer functions. Given any $f\in C([0,1]^d),$   the multi-dimensional sampled data are processed as follows 
  \begin{equation}\label{flowmap-A1}
\underbrace{\big\{\big(\bs c_q^{\bs j}, b_q^{\bs j}=f(\bs c_q^{\bs j})/N\big)\big\}}_{\text{Sample
$f$ at centers}}\;\;{\longrightarrow} \;\; 
 \underbrace{\big\{\big( a_q^{\bs j}=\Psi_q(\bs c_q^{\bs j};\bs \lambda^*), b_q^{\bs j}\big)\big\}}_{\text{Map to 1D via}\; \eqref{aqJaj}}
\;\longrightarrow \;  
    \underbrace{\big\{\big(\bs A_q, \bs B_q\big)\big\}}_{\text{Row-major ordering}},
\end{equation}
from which the outer functions are constructed by interpolating the resulting
one-dimensional data set $\{(\bs A_q,\bs B_q)\}$. 
\begin{definition}[{\bf  Approximate KST}]\label{Defn:Outer01} {\em  For $d\ge 2$ and   $0<h<1,$
let $\nu_*\in (0,h)$ and $\bs \lambda^*=(\lambda_1^*, \ldots, \lambda_d^*)$  be  given in Theorem \ref{Hypercube-Sep}, and let $\{(\tau_k, s_k)\}_{k=1}^{K}$ be the elements of 
$(\bs A_q, \bs B_q)$ with $K$ being the length of $\bs A_q.$ Then we define the outer function at each level $1\le q\le N$ as 
\begin{equation}\label{qj2g01-A}
  g_q(t)=s_k +(s_{k+1}-s_k) S\big(\tfrac{t-\tau_k}{\tau_{k+1}-\tau_k}\big), \quad t\in  [\tau_k, \tau_{k+1}],
\end{equation}
for $k=1,\ldots, K-1,$ where $S$ is the smeared-out Heaviside function  defined in \eqref{St}. Accordingly, the approximate Kolmogorov  representation of $f\in C([0,1]^d)$  takes the form 
\begin{equation}\label{0fx-KA-form-B00}
      f_N(\bs x)=f_N(\bs x; \bs \lambda^*)=\sum_{q=1}^{N} g_q\circ\Psi_q(\bs x;\bs \lambda^*)\;\;\;
      {\rm with}
      \;\;\; \Psi_q(\bs x;\bs \lambda^*)=\sum_{p=1}^d \lambda_p^*\, \psi_q(x_p),
\end{equation}
where the inner functions $\psi_q$ are defined in Definition \ref{Inner-fun}.
}
\end{definition}

\begin{figure}[!h]
\centering  
 \subfigure[$f(x_1,x_2)$ in \eqref{egzero}]{
    \includegraphics[width=0.27\textwidth]{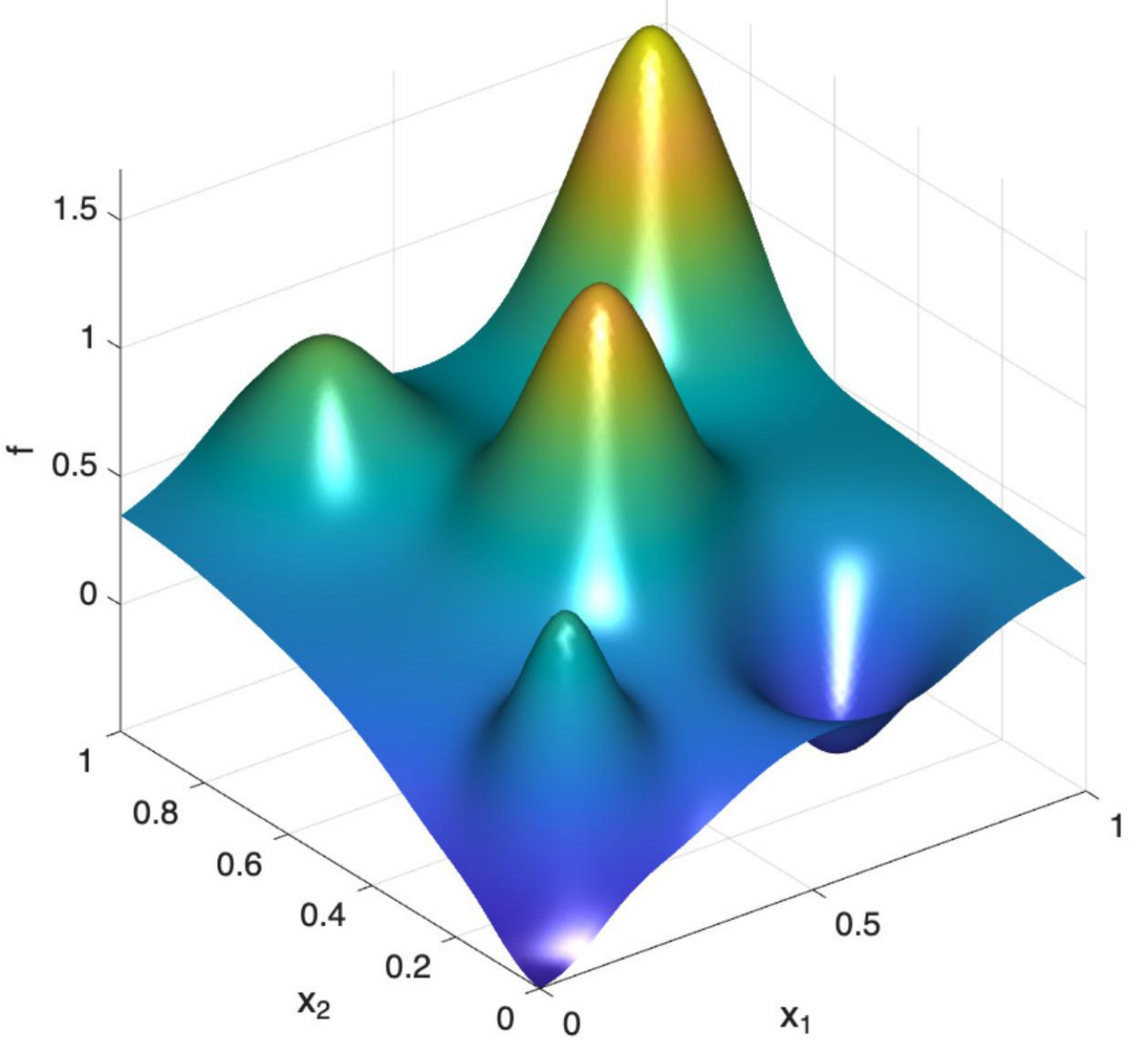}
     }\qquad 
      \subfigure[Sampling $f$  via \eqref{aqJaj2-A0}]{
    \includegraphics[width=0.25\textwidth]{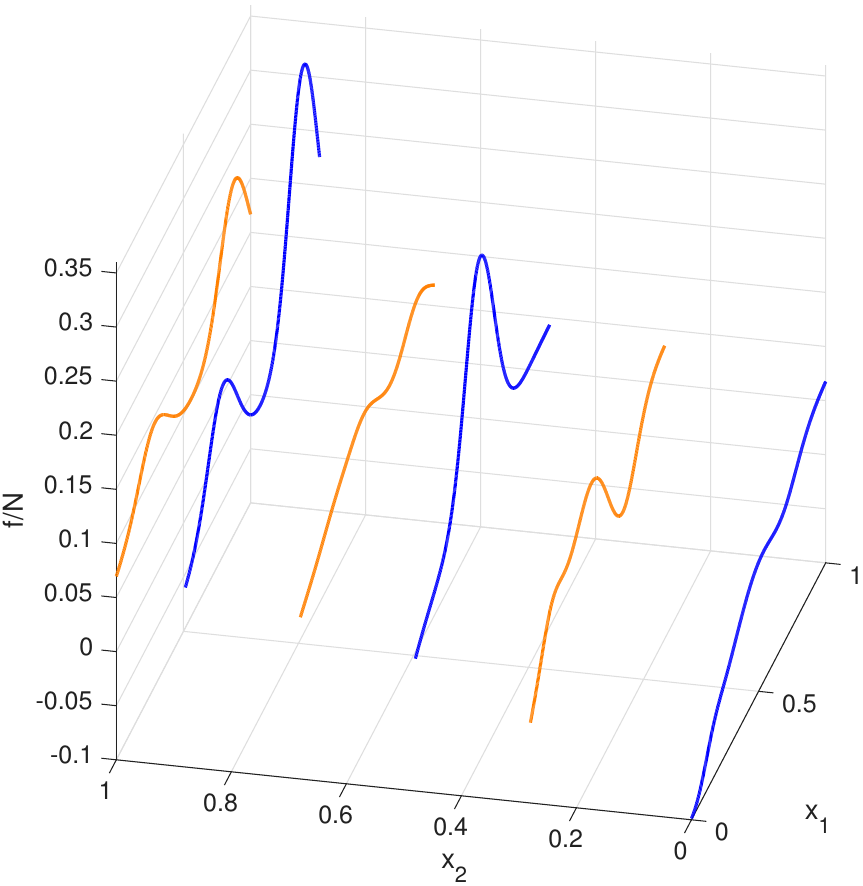}
     }\qquad
        \subfigure[$S$-interpolation]{
    \includegraphics[width=0.25\textwidth]{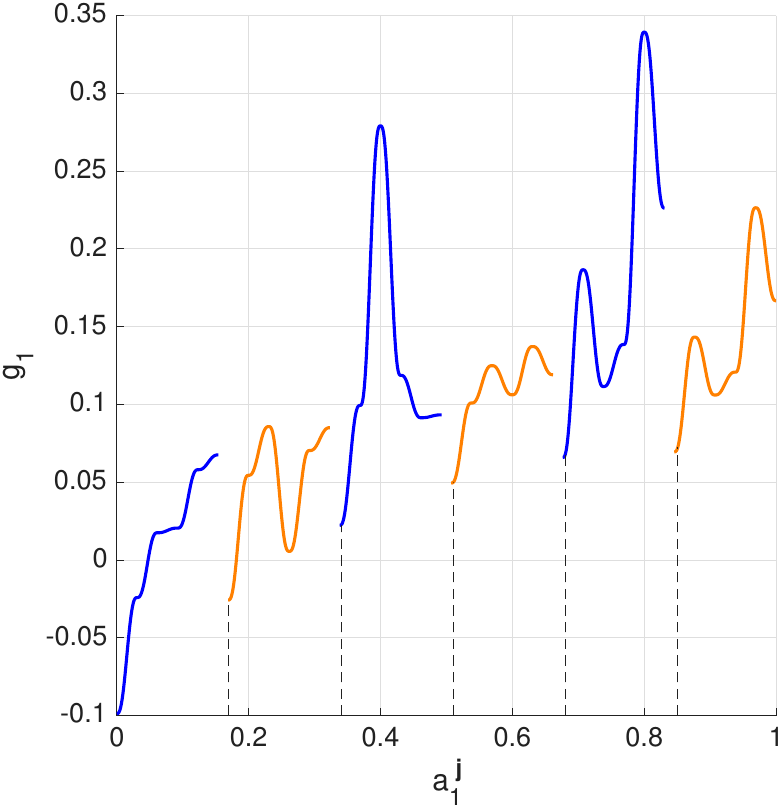}
    }
    \caption{(a) Surface plot of $f(x_1,x_2)$ in \eqref{egzero}. (b) Plots of the curves $f(:, c_1^{j_2})/N$
    with $c_1^{j_2}$ being the $x_2$-coordinate of the  centers as in Figure \ref{figsub2D-A0}~(a). (c) Row-wise interpolation as in \eqref{qj2g01-A} with $q=1$. Note that  
    $g_1(t)$ in \eqref{qj2g01-A} also connects the jumps between the six curves. 
     }
    \label{figC2function}
\end{figure}

\begin{rem}\label{S-g-remakr}
\wll{\em Using the properties of $S$, we know that $g_q:[0,1]\to\mathbb R$ defined in \eqref{qj2g01-A} is a piecewise $C^2$ function. In fact, one may also use the shape function $P$ for interpolation, and possibly interpolate every several points of $\boldsymbol A_q$.  \qed}
\end{rem}

To illustrate the data flow and row-wise interpolation, we consider  a two-dimensional example  (see Figure \ref{figC2function}~(a)):
   \begin{equation}\label{egzero}
\begin{split}
    f(x_1, x_2)&=\tanh(\ln|\sin(x_1+x_2)+\epsilon|)+e^{x_1\tan x_2})/2\\
   &\quad\;  +\sum\limits_{i=1}^5  \alpha_i\, e^{-(x_1 - 
   \beta_{1,i})^2/\sigma_i^2 - (x_2 - \beta_{2,i})^2/\sigma_i^2},
   \end{split}
\end{equation}
with $\bs \alpha=(0.75, 0.5, 1, -0.6, 1.2),$
$\bs \beta_1=(0.16, 0.28, 0.48, 0.68, 0.88),
    \bs \beta_2=(0.15, 0.88, 0.48, 0.18,$  $0.88),  
    \bs\sigma=(0.1, 0.15, 0.15, 0.15, 0.15),$ and $\epsilon=10^{-8}.$  
We slice and sample the surface \( f \) along  \( x_2 \)-coordinate of the centers shown in Figure~\ref{figC2function} (b). 
The curves in
Figure~\ref{figC2function} (c) are obtained from row-wise interpolation of the sampled data at the centers from (b).

\section{Main approximation result and numerical verifications}
\label{Sect:MainProof-1}

In this section, we present the main convergence result for the approximate Kolmogorov superpositions constructed in Definition \ref{Defn:Outer01}. We provide a detailed proof and numerical results to verify the predicted convergence order.
\subsection{Main result}
\begin{theorem}\label{MainResult} Let $d\ge 2$ and 
assume that  $f\in {\mathcal H}^\alpha([0,1]^d)$ is any $\alpha$-H\"older continuous function satisfying 
$f\in C([0,1]^d)$ and 
\begin{equation}\label{alpha-cond}
    |f(\bs x)-f(\bs y)|\le c_{\alpha}\, \|\bs x-\bs y\|_2^\alpha, \quad \forall\, \bs x,\bs y \in [0,1]^d,\;\;\; 0<\alpha\le 1.
\end{equation}
 Let $f_N(\bs x)$ be the constructed superpositions as in Definition \ref{Defn:Outer01}. Then we have 
\begin{equation}\label{approximA}
\|f-f_N\|_\infty \le  \Big(1+\frac{d^{\frac{\alpha}{2}}}{2^\alpha} +\frac{2 d^{\frac \alpha 2+1 }}{N^{1-\alpha}}\Big)\frac {c_{\alpha}} {N^{\alpha}}.
\end{equation}
Here, $\|\bs z\|_2$ is the length of the vector $\bs z$, and $\|\cdot\|_\infty$ is the usual $L^\infty$-norm.  
\end{theorem}
\begin{proof} 
By definition,  each $\bs x\in [0,1]^d$ is covered by a hypercube   
$\bs C_q^{\bs j}$ for  at least $(N-d)$ different values of $q\in\{1,\ldots, N\}$, but not covered by at most $d$ values of $q$ (i.e., falls into a gap). 
Then we can split the error into 
\begin{equation}\label{ffNas}
\begin{aligned}
|f(\bs x)-f_N(\bs x)|& = \Big|f(\bs{x})-\sum_{q=1}^N g_q\circ\Psi_q(\bs{x})\Big|\\
& =
\Big|\sum_{q=1}^N \Big(\frac {f(\bs{x})} N- g_q\circ\Psi_q(\bs{x})\Big)\Big| \leq \sum_{q=1}^N\Big|\frac{f(\bs x)}{N}-g_q\circ\Psi_q(\bs x)\Big| \\
& =  \sum_{q\in {\mathcal Q}}\Big|\frac{f(\bs x)}{N}-g_q\circ\Psi_q(\bs x)\Big|+\sum_{q\not\in {\mathcal Q}}\Big|\frac{f(\bs x)}{N}-g_q\circ\Psi_q(\bs x)\Big|,
\end{aligned}
\end{equation}
where  ${\mathcal Q}$ denotes the set of at least $N-d$ values of $q$ for which the hypercubes $\bs C_q^{\bs{j}}$ cover the point $\bs x,$ and the second summation over $q\not\in {\mathcal Q}$ counts the contributions of the levels where $\bs x$ is in a gap.  

If $q\in {\mathcal Q},$ i.e.,  $\bs x\in  \bs C_q^{\bs{j}}$  for some $\bs j$ with the center $\bs{c}_{q}^{\bs{j}}$ (see \eqref{cqjcenter}),  it follows from \eqref{alpha-cond}  that
\begin{equation}\label{Oscf2}
|f(\bs{x})-f(\bs{c}_q^{\bs{j}})|\leq c_{\alpha}\,\|\bs{x}-\bs{c}_q^{\bs{j}}\|^{\alpha}_2 \leq c_{\alpha} \Big(\frac{\sqrt{d}(N-1)\delta}{2}\Big)^{\alpha}.
\end{equation}
Correspondingly, we can identify from  \eqref{mon-hyperb} that the image $t=\Psi_q(\bs{x};\bs \lambda^*)\in [e_q^{\bs j}, \tilde e_q^{\bs j}]$ must lie in a piece, say $\tau_k, \tau_{k+1}$ in \eqref{qj2g01-A}.  Thus,   
 we  derive from   \eqref{Oscf2} that 
\begin{equation}\label{Nsquare2}
\begin{split}
\Big|\frac{f(\bs{x})}{N}  -g_q\circ\Psi_q(\bs{x})\Big| & \le  \Big|\frac {f(\bs{x})}N - \frac   {f(\bs{c}_q^{\bs{j}})} N\Big|+\Big|\frac   {f(\bs{c}_q^{\bs{j}})} N-g_q\circ\Psi_q(\bs{x};\bs \lambda^*)\Big| \le \frac 1 N \big|{f(\bs{x})}-   {f(\bs{c}_q^{\bs{j}})} \big|\\[4pt]
&\quad +\big|(s_{k+1}-s_k) S\big((\Psi_q(\bs x;\bs \lambda^*)-\tau_k)/(\tau_{k+1}-\tau_k)\big)\big|\\[4pt]
&\le \frac 1 N \big|{f(\bs{x})}-   {f(\bs{c}_q^{\bs{j}})} \big|+|s_{k+1}-s_k | \\[4pt]
&\le  \frac{c_{\alpha}} N \Big(\frac{\sqrt{d}(N-1)\delta}{2}\Big)^{\alpha}+ \frac{c_{\alpha}}{N} (N\delta)^{\alpha}
\\[4pt]
&\leq \frac{c_{\alpha}}{N}  \Big(1+\frac{d^{\frac{\alpha}{2}}} {2^{\alpha}}\Big)(N\delta)^{\alpha},
\end{split}
\end{equation}
where we have used $S(z)\in [0,1]$ (noted: $|\bs \lambda^*|_1=1$) and  \eqref{alpha-cond} with the function values in the corresponding row:  
$$s_{k+i}=b_{q,k+i}^{\bs j}=\tfrac{f(\bs{c}_{q,k+i}^{\bs{j}})}{N},\quad i=1,2,$$  to bound
\begin{equation}\label{bqjest}
\begin{split}
|s_{k+1}-s_k | &=|b_{q,k+1}^{\bs j,j_p}-b_{q,k}^{\bs j,j_p}|=\frac 1 N\big|f(\bs c_{q,k+1}^{\bs j,j_p})-f(\bs c_{q,k}^{\bs j,j_p})\big|\\[4pt]
  &\le \frac {c_{\alpha}} N \big\|\bs c_{q,k+1}^{\bs j,j_p}-\bs c_{q,k}^{\bs j,j_p}\big\|_2^\alpha  \leq \frac{c_{\alpha}}{N} (N\delta)^{\alpha}.
  \end{split}
\end{equation}
Then we obtain estimate of the first term in \eqref{ffNas}:
\begin{equation}\label{term1-44}
    \sum_{q\in {\mathcal Q}}\Big|\frac{f(\bs x)}{N}-g_q\circ\Psi_q(\bs x)\Big| \le \frac{c_{\alpha}(N-d)}{N}  \Big(1+\frac{d^{\frac{\alpha}{2}}} {2^{\alpha}}\Big)(N\delta)^{\alpha}.
\end{equation}
It remains to consider  $q\notin {\mathcal Q},$ i.e.,
$\bs x$ is in a gap. Again letting  
$t=\Psi_q(\bs{x};\bs \lambda^*),$ 
we can identify the corresponding piece $t\in [\tau_\ell,\,  \tau_{\ell+1}],$ associated with the centers and data:
$$
\tau_\ell= \Psi_q(\bs{c}_q^{\bs i};\bs \lambda^*),\quad  \tau_{\ell+1}= \Psi_q(\bs{c}_q^{\bs i'};\bs \lambda^*), \quad  s_\ell= f(\bs{c}_q^{\bs i})/N,\quad  s_{\ell+1}= f(\bs{c}_q^{\bs i'})/N.
$$
We choose the center closest to $\bs x,$ say $\bs{c}_q^{\bs i}.$ Following the first three lines of \eqref{Nsquare2}, we have 
\begin{equation}\label{Nsquare2-00}
\begin{split}
\Big|\frac{f(\bs{x})}{N}  -g_q\circ\Psi_q(\bs{x})\Big| & 
\le \frac 1 N \big|{f(\bs{x})}-   {f(\bs{c}_q^{\bs{i}})} \big|+|s_{\ell+1}-s_\ell|\\[4pt] 
& =\frac 1 N \big|{f(\bs{x})}-   {f(\bs{c}_q^{\bs{i}})} \big|+\frac 1 N \big|f(\bs{c}_q^{\bs i})-   f(\bs{c}_q^{\bs{i'}}) \big|\\[4pt]
&\le \frac{c_{\alpha}} N \big(\|\bs x- 
\bs{c}_q^{\bs{i}}\|_2^\alpha +
\|\bs{c}_q^{\bs{i}}-\bs{c}_q^{\bs{i'}}\|_2^\alpha\big).
\end{split}
\end{equation}
Note that the distance between $\bs{c}_q^{\bs{i}}$ and $\bs{c}_q^{\bs{i'}}$ can be controlled by the longest distance between two adjacent $\bs v_q^{\bs j}$ and 
$\tilde {\bs v}_q^{\bs j}$ in 
\eqref{lower-corner1}. From \eqref{secondcondforanydd}, we find  
\begin{equation}\label{cqi-est}
\|\bs{c}_q^{\bs{i}}-\bs{c}_q^{\bs{i'}}\|_2
\le \| \bs v_q^{(-1,-1,\ldots,-1,j_d+1)}-\tilde {\bs v}_q^{(N-1,N-1,\ldots,N-1,j_d)} \|_2
\le \sqrt{d-1+\delta^2}.
\end{equation}
The choice of $\lambda^*$ can ensure the ordering of images of the points in the hypercubes in rows, but cannot guarantee the points in the gaps. In other words, although $t\in [\tau_\ell,\,  \tau_{\ell+1}],$  its pre-image $\bs x$ may be far away from the pre-images $\bs{c}_q^{\bs{i}}$ and $\bs{c}_q^{\bs{i'}}$ (e.g, for the gaps of the largest volume  $(h-\delta)^{d-1}\delta$), the worst case can always be bounded by 
\begin{equation}\label{cqi-est2}
\|\bs x- 
\bs{c}_q^{\bs{i}}\|_2\le \sqrt d.
\end{equation}
Thus from  \eqref{Nsquare2-00} and the above,  we can obtain  
\begin{equation}\label{term1-44-00}
    \sum_{q\not \in {\mathcal Q}}\Big|\frac{f(\bs x)}{N}-g_q\circ\Psi_q(\bs x)\Big| \le  \frac{2c_{\alpha}}{N} d^{\frac \alpha 2+1}.  
\end{equation}
Then the estimate \eqref{approximA} is a direct consequence of  \eqref{ffNas}
\eqref{alpha-cond}, \eqref{term1-44} and \eqref{term1-44-00}. 
\end{proof}

\wll{Some remarks and discussions on the above convergence result are in order. 
\begin{itemize}
\item[(i)]  With the choice of $\nu^*$ and 
$\bs \lambda^*$ in Theorem \ref{Hypercube-Sep}, we have the  uniform convergence order $O(N^{-\alpha})$ 
with an explicit dependence on $d.$ We find from \eqref{term1-44-00} that the order from the points in the gaps is $O(N^{-1}),$  which is induced by the possible dislocations of the data as shown in \eqref{cqi-est}-\eqref{cqi-est2}, and independent of the smoothness of $f$. With 
controlling the hypercubes via $\bs E_q$ in Lemma \ref{lmm:Hypercube-Sep} and Theorem \ref{Hypercube-Sep},   the convergence order is adapted to the smoothness.   Overall, even for sufficiently smooth $f$, the highest expected order should be $O(N^{-1}).$     
\smallskip
\item[(ii)] Observe that the parameter $\nu^*=h-h^d+O(h^{d+1})$; see Corollary \ref{lamda-small}. Although it does not appear explicitly in the error bound, it plays an important role in the whole construction. Indeed, from \eqref{shapfunc0}, we see that $h-\nu^*=h^d+O(h^{d+1})$, which controls the deviation of $\phi(t)$ and $\psi_q(x)$ from constant values. In particular, the higher the dimension is, the closer the construction is to constant pieces on the hypercubes. In fact, it is clear from our explicit construction that this requirement results from the separation and ordering of the hypercubes. Therefore, the higher the dimension is, the stricter the requirement becomes.  {\em To our knowledge, this insight has not been revealed in any existing constructions.} 
\smallskip
\item[(iii)] Although our construction is designed for non-constant pieces on the hypercubes, it is interesting to examine the degenerate case $h=\nu$ in \eqref{shapfunc0}.  
In this case, the equalities in \eqref{lambdapsi}--\eqref{mon-hyperb} hold, and hence $e^{\bs j}_q=a^{\bs j}_q=\tilde e^{\bs j}_q$. In other words, all points in a hypercube are mapped to a single value, so any point in the hypercube can be chosen as an abscissa for interpolation, say $a_q^{\bs j}$. Accordingly, the analysis of $\bs E_q$ reduces to that of $\bs A_q$, for which the separation property \eqref{distanceajj-00} holds. Moreover, an analysis of the heads and tails of the rows, similar to that in Lemma \ref{lmm:Hypercube-Sep}, can be carried out to derive the conditions on $\bs\lambda$. A similar convergence estimate should still hold. However, the resulting inner functions have vanishing derivatives in all hypercubes. We note that the constructive proof in \cite{Kahane1975} and the constructions in \cite{Igelnik2003} adopt such constant structures.
\end{itemize}
}

\subsection{Numerical algorithm and verifications} 
We first summarize and present the computation of the approximate Kolmogorov's superpositions in 
Algorithm \ref{alg:empirical_Linf_fN}. To alleviate 
the storage requirement in high dimensions, we parallelize the implementation so that each sample $\bs x$ and each layer $q\in \{1,\ldots, N\}$ (see Lines 1-14 in 
Algorithm \ref{alg:empirical_Linf_fN}) can be computed in parallel.   It follows from the proof of Theorem \ref{MainResult} that, when $\bs x$ falls into a gap,
which can occur at most $d$ levels among the total $N$ levels, the map
$\Psi_q$ is non-injective in the sense that any $t=\Psi_q(\bs x;\bs \lambda^*)\in [0,1]$ may
correspond to many points $\bs x\in [0,1]^d.$ For simplicity, we associate it with the nearest hypercube; see Lines 4–6. In fact, the analysis in \eqref{Nsquare2-00}–\eqref{cqi-est} shows that it can also be handled in other ways.  

\begin{algorithm}[htbp]
\caption{\bf Algorithm for computing $f_N(\bs x)$ and $e_N(\bs x)=|f(\bs x)-f_N(\bs x)|$}
\label{alg:empirical_Linf_fN}
\begin{algorithmic}[1]
\Require  Function:
$f(\bs x)\in {\mathcal H}^\alpha([0,1]^d)$ 
at any randomly sampled  point $\bs x=(x_1,\dots,x_d)\in[0,1]^d$;  
Parameters:
$N,~h=1/N,~\delta=1/N^2,$ and $\nu^\ast,\,
\boldsymbol{\lambda}^\ast=(\lambda_1^\ast,\lambda_2^\ast,\dots,\lambda_d^\ast)
$ given by
\eqref{qAeqnA} and \eqref{def_d_lambda} 
\smallskip
\Ensure The value
   $f_N(\bs x)$ and the point-wise error $e_N(\bs x)=|f(\bs x)-f_N(\bs x)|$
\smallskip

\For{$q=1,2,\dots,N$} \textbf{in parallel}  \smallskip
    \State Set $\boldsymbol{j}=(j_1,\dots,j_d)$ with $j_p=\big\lfloor \frac{x_p-q\delta}{h}\big\rfloor,$ and compute  $\bs{v}_q^{\bs j},~\tilde{\bs{v}}_q^{\bs j},~\bs{c}_q^{\bs j}$ by \eqref{lower-corner1}-\eqref{cqjc-00}
    \smallskip
    \For{$p=1,2,\dots,d$} 
        \If{$x_p>\tilde v_q^{j_p}$}
            \State $x_p\leftarrow x_p-\delta$
        \EndIf
    \EndFor
    \smallskip
        \State Compute $t=\Psi_q(\bs x)=\sum_{p=1}^d \lambda_p^\ast\,\psi_p(x_p)$ by \eqref{d-PsiA0}
        \smallskip
        \If{$\Psi_q(\bs x)\le\Psi_q(\bs{c}_q^{\bs j})$} 
        \smallskip
            \State Evaluate $g_q(t)$ with $\tau_k=\Psi_q(\bs{c}_q^{\bs j,\mathrm{-}})$ and $\tau_{k+1}=\Psi_q(\bs{c}_q^{\bs j})$
        by \eqref{qj2g01-A}, where 
        \hspace*{1cm} $\Psi_q(\bs{c}_q^{\bs j,\mathrm{-}})$ denotes the element to closest  $\Psi_q(\bs{c}_q^{\bs j})$ in  $\bs A_q$ \eqref{ufform}, lying to its left
        \Else
        \smallskip
            \State Evaluate $g_q(t)$ with $\tau_{k}=\Psi_q(\bs{c}_q^{\bs j})$ and $\tau_{k+1}=\Psi_q(\bs{c}_q^{\bs j,\mathrm{+}})$
        by \eqref{qj2g01-A}, where \hspace*{1cm} $\Psi_q(\bs{c}_q^{\bs j,\mathrm{+}})$ is the  element closest to $\Psi_q(\bs{c}_q^{\bs j})$ in $\bs A_q$, lying to its right     
        \EndIf
        \smallskip
\EndFor
\smallskip
\State Collect $\{g_1\circ \Psi_1(\bs x), \ldots, g_N\circ \Psi_N(\bs x)\},$  and 
compute $f_N(\bs x)=\sum_{q=1}^Ng_q\circ\Psi_q(\bs x)$
\smallskip
\State Compute the pointwise error $e_N(\bs x)=|f(\bs x)-f_N(\bs x)|$
\smallskip
\State 
\smallskip 
\Return $f_N(\bs x)$ and $e_N(\bs x)$
\end{algorithmic}
\end{algorithm}

In the first example, we consider 
$f(\bs x)=u(\bs x)\ln |u(\bs x)|,$ where $u(\bs x)$ is  given by 
\begin{equation}\label{eg2D3D4D}
\begin{split}
  u(\bs x)&=
\tanh(\ln(|\sin(x_1+\cdots+x_d)|+\epsilon)+e^{x_1\cos (x_2+\cdots+x_d)})/2\\
   &\quad\;  +\sum\limits_{i=1}^5  \alpha_i\, e^{-(x_1 - 
   \beta_{1,i})^2/\sigma_i^2 - (x_2 - \beta_{2,i})^2/\sigma_i^2\cdots-(x_d-\beta_{d,i})^2/\sigma_i^2},
   \end{split}
\end{equation}
where we choose the constants so that $|u(\bs x)|$ can take zero values at various locations. We set  $f=0$ if $u=0.$ Here we  take  $\bs \alpha=(0.75, 0.5, 1, -0.6, 1.2),$
$\bs \beta_1=(0.16, 0.28, 0.48, 0.68, 0.88),
    \bs \beta_2=(0.15, 0.88, 0.48, 0.18,$  $0.88),  \bs \beta_3=(0.06, 0.15, 0.37, 0.59, 0.91), $ 
    $ \bs \beta_4=(0.04, 0.27, 0.39, 0.53, 0.85),    $ $\bs \beta_5=(0.01, 0.11, 0.26, 0.67, 0.77), $ 
$\bs \beta_6=(0.12, 0.22, 0.46, 0.87, 0.97),$ $
    \bs \beta_7=(0.25, 0.48, 0.55, 0.67,$ $ 0.7),
    $ $\bs \beta_8=(0.13, 0.41, 0.56, 0.77, 0.87),
    $ $\bs \beta_9=(0.02, 0.32, 0.36, 0.57, 0.83),
    $ $\bs\sigma=(0.1, 0.15, 0.15,$ $ 0.15, 0.15),$ and $\epsilon=10^{-8}.$  
    It is noteworthy that $f$ is $\alpha$-H\"older continuous with any $0<\alpha<1$ (see e.g., \cite{Wang2024IMEX}).

 \wll{In the test, we randomly sample $M$ points
 $\mathcal X_M$ for $2\le d\le 9,$ and then calculate the maximum point-wise error 
 $\max\limits_{\bs x\in {\mathcal X}_M} e_N(\bs x).$ Here,  we take 
 $M=1e4, 2.7e4, 1.6e4, 1e5$ for $d=2,3,4,5,$ respectively, and plot in Figure \ref{fig:ulogu-A0}~(a) the errors  against $N=32,64,128,256,512,1024$ in log-log scale, which indicates roughly a first-order convergence as predicted in Theorem \ref{MainResult}.  In higher dimension, we take
 $M=6^6,5^7,4^8,4^9=46656,  78125, 65536, 262144$ for  $d=6,7,8,9,$ respectively. In fact, as $d$ increases, the storage becomes the major concern, and the speed is much less concerned.  
 For example, for $d=5$, the algorithm for  $N=32,64,128,256,512,1024$ with the above samples  
 takes only about $9$ minutes on a standard laptop. The convergence order illustrated in Figure \ref{fig:ulogu-A0}~(b) again well agrees with the predicted rate  
 in Theorem \ref{MainResult}.}

\begin{figure}[!h]
   \centering
        \subfigure[$d=2,3,4,5$]{
\includegraphics[width=0.45\textwidth]{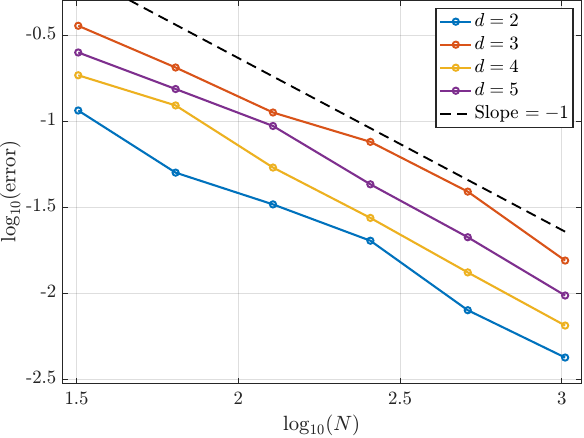}
     } \qquad 
        \subfigure[$d=6,7,8,9$]{
\includegraphics[width=0.45\textwidth]{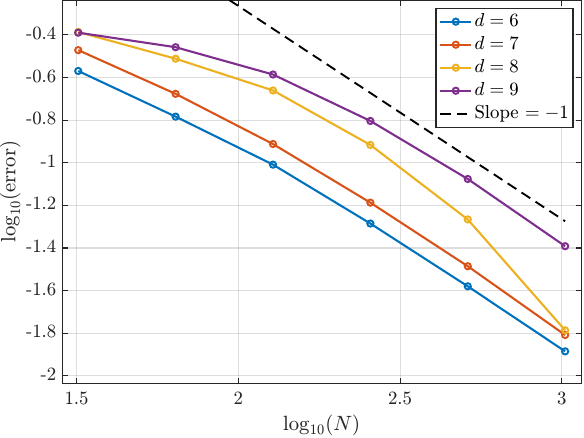}
    } 
    \caption{Convergence rates of the approximate superpositions for the function defined in \eqref{eg2D3D4D} for $N=32,64,128,256,512,1024$.  Left:  $d=2,3,4,5.$ Right: $d=6,7,8,9$.}
 \label{fig:ulogu-A0}
\end{figure}

\wll{In the second example, we test the approximate on 
\begin{equation}\label{eg2D3D4D_2}
f(\boldsymbol{x})
=
\sin\Big(
\sum_{p=1}^{d}
|
x_p
-x_p^0|^{\alpha}\Big),\quad x_p^0:= 0.31+\frac{0.42(p-1)}{d-1},
\end{equation}
for $\alpha\in (0,1),$ which is singular at $x_p^0$ and belongs to ${\mathcal H}^\alpha([0,1]^d).$
In addition to the sampling points as in the first  example, we properly sample more points near singularities at $x_p=x_p^0,$ to compute the errors more accurately.  In Figure \ref{fig:ulogu2-A1}, we plot the convergence rates for   \eqref{eg2D3D4D_2} with $\alpha=0.25$ in the same setting as the first example.  Remarkably, the constructed approximate superpositions converge at the same rate as predicted. 
\begin{figure}[!h]
   \centering
        \subfigure[$d=2,3,4,5$]{
\includegraphics[width=0.45\textwidth]{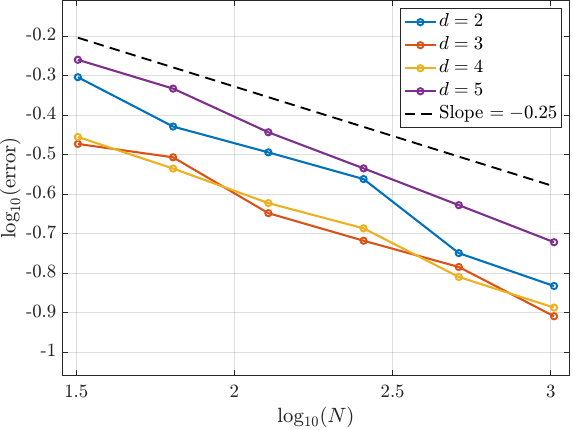}
     } \qquad 
        \subfigure[$d=6,7,8,9$]{
\includegraphics[width=0.45\textwidth]{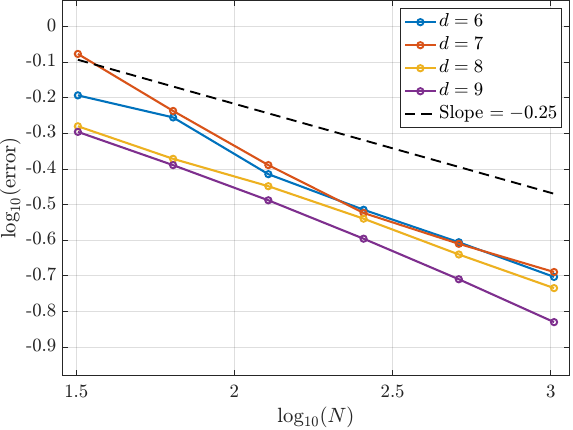}
    } 
    \caption{Convergence rates of the approximate superpositions for the function defined in \eqref{eg2D3D4D_2} with $\alpha=0.25$ for $N=32,64,128,256,512,1024$.  Left:  $d=2,3,4,5.$ Right: $d=6,7,8,9$.}
 \label{fig:ulogu2-A1}
\end{figure}

 From the above figures, the dependence on $d$ does not appear to be clearly observable. This may be because the H\"older continuity constant $c_\alpha$ also depends on $d$. 
}

\subsection{Concluding remarks}

It is well known that the univariate functions involved in the exact Kolmogorov superpositions lack differentiability and smoothness, often exhibiting “wild” behavior—even when the target function 
$f(\bs x)$ is smooth. This significantly hinders the practical application of this mathematically elegant theory.  It was suggested that one should sacrifice the exact representation and use an approximate one instead \cite{Kourkova1991,Kuurkova1992kolmogorov}. However, the explicit construction of a smooth approximation version is still open (see \cite{Demb2021note} and \cite[Sec.~2.4]{Shen2021three}).  \wll{This work resolves this open issue by explicitly constructing an approximate $C^2$ version that is more practically implementable, together with rigorous analysis and supporting numerical verifications.}


 We emphasize that our construction relies on  the new shape functions defined in \eqref{St}--\eqref{P-fun}, which possess desirable properties enabling the construction of $C^2$, strictly increasing inner functions. Moreover, the  Kolmogorov maps defined by the superpositions of resulting inner functions allow us to explicitly study the separation of multi-dimensional grids mapped to one dimension, as well as various other properties that have not been quantitatively investigated in existing constructions. It also informs the row-wise interpolation strategy used in the explicit construction of the outer functions, which, to the best of our knowledge, has not been explored in the existing literature.

 As aforementioned, one implication of this approximate version is to serve as an intermediate tool to construct networks and study their 
convergence.  For example,  we consider the general  approximators (see \cite{Toscano2024kkans}) 
\begin{equation}\label{KappM}
f_N^{\Theta}(\boldsymbol{x})=\sum_{q=1}^{N}  \tilde g_q\circ\sum_{p=1}^d \lambda_p \tilde \psi_{p}(x_p),\quad  \forall\, \tilde g_q\in {\mathcal A}_{M_g}(I),\; \forall\, \tilde \psi_{p}\in   {\mathcal A}_{M_\psi}(I), 
\end{equation}
where the ansatz spaces ${\mathcal A}_{M_g}(I)$ and ${\mathcal A}_{M_\psi}(I)$ with $I:=[0,1]$ 
 are chosen to be dense subsets of the spaces of continuous functions $C(I).$   Here, $\Theta$ denotes the set of all involved parameters in the approximators.  This two-block structure followed  the spirit of K{$\dot {\rm u}$}rkov{\'a} \cite{Kourkova1991,Kuurkova1992kolmogorov} and inspired the development of KKANs in \cite{Toscano2024kkans}, which enjoy universal approximability and exhibit rich learning dynamics. 
However, as the one-dimensional functions in the KST involving exact superpositions 
 lack differentiability and smoothness,   it is still open to prove the convergence of KKANs.   With this  approximate version at our disposal, we can formally write
 \begin{equation}\label{newcons}
 \|f-f_N^{\Theta}\|_\infty\le   \|f-f_N\|_\infty +  \|f_N-f_N^{\Theta}\|_\infty,
 \end{equation}
 so leveraging existing approximation results for the second term, our approach paves the way toward a convergence theory for KKANs. We will report the development along this line in future work. 

 We also point out that Kolmogorov's representations have a profound influence on both theoretical development and practical implementation of neural networks.
 For instance, the neural network constructions in \cite{Shen2021three,Lu2021deep,Shen2022optimal} rely on $\delta$-shifting architectures (see,  e.g., Figure~\ref{figsubLq2D}~(a)), while the analytical framework in the recent work \cite{He2024} is grounded in Kolmogorov--Arnold representation theory. In contrast, our construction takes a fundamentally different route, offering fresh perspectives on the theoretical underpinnings of deep neural networks. In particular, it shows some potential for establishing a theoretical foundation for the recently proposed  Kolmogorov--Arnold networks (KANs)  \cite{Liu2024KAN}. In fact, the convergence theory of KANs is based on assumptions about the existence of multi-layer superpositions of the target function \(f(\bs x)\) and the smoothness of the one-dimensional functions involved in these superpositions. In light of the superposition results in \cite{Demko1977superposition,Doss1977superposition}, our approach appears promising for putting the KAN theory on a solid footing.

\wll{As a final remark, the construction of the outer functions in this work is based on global row-wise interpolation of the mapped hypercube centers under the Kolmogorov maps that define the superpositions of the explicit inner functions. This global nature (in the spirit of Kolmogorov's representations)  requires the parameters to be selected as specified in Theorem~\ref{Hypercube-Sep}. In particular, the choice $\nu=\nu^*$ makes the inner functions become nearly flat within the hypercubes as the dimension $d$ increases, so the derivatives of the inner functions become nearly zero in hypercubes with large variations in gaps.
This may be viewed as a natural consequence of encoding $d$-dimensional information through one-dimensional superpositions. However, this restriction may be significantly relaxed if one reduces the dimension only from $d$ to $d-1$ or, more generally, to a lower dimension, rather than all the way to one dimension, as in the earlier pre-1957 superpositions of Kolmogorov and Arnold. We shall explore this direction in future work. }

\bigskip 
\noindent{\bf Acknowledgments:}\, The authors gratefully acknowledge Professor George Em Karniadakis of Brown University for many insightful discussions on this topic and, in particular, for his valuable suggestion to include the keywords ``explicit construction'' in the title.  The authors would also like to thank Professor Zhongjian Wang of Nanyang Technological University for his constructive challenges and discussions in the development of this construction.


\end{document}